\DeclareMathAlphabet{\mathpzc}{OT1}{pzc}{m}{it}
\DeclareMathAlphabet\EuFrak{U}{euf}{m}{n}	
\SetMathAlphabet\EuFrak{bold}{U}{euf}{b}{n}	
\newcommand{\Aut}{ {\bf aut \, } }
\newcommand{\bs}{\boldsymbol}
\newcommand{\bsh}{{\boldsymbol h}}
\newcommand{\bsx}{{\boldsymbol x}}
\newcommand{\bsy}{{\boldsymbol y}}
\newcommand{\bC} {{\mathbb C}}
\newcommand{\bR} {{\mathbb R}}
\newcommand{\bU} {{\mathbb U}}
\newcommand{\bZ} {{\mathbb Z}}
\newcommand{\bN} {{\mathbb N}}
\newcommand{\bP} {{\mathbb P}}
\newcommand{\eps}{\epsilon}
\newcommand{\mA}{\mathcal A}
\newcommand{\mB}{\mathcal B}
\newcommand{\mC}{\mathcal C}
\newcommand{\mD}{\mathcal D}
\newcommand{\mF}{\mathcal F}
\newcommand{\mG}{\mathcal G}
\newcommand{\mH}{\mathcal H}
\newcommand{\mO}{\mathcal O}
\newcommand{\mS}{\mathcal S}
\newcommand{\mU}{\mathcal U}
\newcommand{\mW}{\mathcal W}
\newcommand{\efh}{\EuFrak h}
\newcommand{\efB}{\EuFrak{B}}
\newcommand{\efF}{\EuFrak{F}}
\newcommand{\efK}{\EuFrak{K}}
\newcommand{\lb}{{\left\langle \right.}}
\newcommand{\rb}{{\left. \right\rangle}}
\newcommand{\Dom}{ \, {\mathrm{Dom}} \, }
\newcommand{\supp}{{\mathrm{supp}} \, }
\newtheorem{thm}{Theorem}[section]
\newtheorem{lem}[thm]{Lemma}
\newtheorem{prop}[thm]{Proposition}
\newtheorem{defn}[thm]{Definition}
\newtheorem{rem}[thm]{Remark}
\theoremstyle{definition}
\newtheorem{ex}{Example}[section]
\theoremstyle{remark}
\numberwithin{equation}{section}
\begin{document}

\author{\textsc{Ezio Vasselli} \\
\small{Dipartimento di Matematica, Universit\`a di Roma ``Tor Vergata'',}\\
\small{Via della Ricerca Scientifica, I-00133 Roma,  Italy.}  \\
\small{\texttt{ ezio.vasselli@gmail.com  }}\\[20pt]
}

\title{ Twisted tensor products of field algebras }
\maketitle

\begin{abstract} 
Let $\mA$ be a C*-algebra, $\bsh$ a Hilbert space and $\mC_\bsh$ the CAR algebra over $\bsh$.
We construct a twisted tensor product of $\mA$ by $\mC_\bsh$ such that the two
factors are not necessarily one in the relative commutant of the other.
The resulting C*-algebra may be regarded as a generalized CAR algebra constructed 
over a suitable Hilbert $\mA$-bimodule.
As an application, we exhibit a class of fixed-time models where a free Dirac field (giving rise to the $\mC_\bsh$ factor)
in general is not relatively local to a free scalar field (which yields the $\mA$ factor).
In some of the models, gauge-invariant combinations of 
the two (not relatively local) fields form a local observable net. 

\medskip

\noindent 
{\bf Mathematics Subject Classification.} 46L06, 46L08, 81T05 \\
{\bf Keywords.} C*-algebras, Hilbert modules, Fock spaces, quantum field theory
\end{abstract}



\section{Introduction}
\label{sec.intro}

In quantum field theory, a common way to construct a field system is to perform 
the tensor product of Fock spaces and define for each factor the corresponding free field. 
This method is used in both exactly solvable models and perturbative theory,
as a preliminary step to perform Wick or time-ordered products
and get the corresponding Wightman or Green functions.
At the algebraic level the typical construction is given by the spatial tensor product 
$\mF \doteq \mC_\bsh \otimes \mA$,
where $\mC_\bsh$ is the CAR algebra over a Hilbert space $\bsh$ (generalized free Fermi field)
and $\mA$ is a CCR algebra (generalized free Bosonic field).

\medskip 

The starting point of the present paper is the remark that we may regard $\mF$ 
as the C*-algebra generated by a Fermi field $\psi$ intended in a broader sense,
defined in terms of creation and annihilation operators living in a 
fermionic Fock bimodule $\efF_-(\efh)$.
Here, $\efh \doteq \bsh \otimes \mA$ is the free Hilbert bimodule
carrying the \emph{trivial left action} uniquely determined by the relations
$wA = Aw$, $w \in \bsh$, $A \in \mA$.
Adopting this point of view, $\mA$ appears as part of the target space for 
the "non-commutative anticommutator function" of $\psi$.

\medskip 

This suggests a strategy to escape from tensor products and possibly produce
interesting models. The idea is that of considering 
Hilbert bimodules carrying a non-trivial left action,
with the aim of constructing field systems where the Bosonic components do not 
necessarily commute with the fermionic ones. 
This is what happens, for example, in QED, 
where (in positive gauges) charged fields cannot be relatively local to
the electromagnetic field \cite{FPS74}. 
%

\medskip

It is aim of the present paper to provide an algebraic machinery to
construct field C*-algebras having the above mentioned property.
We start by reviewing generic Hilbert bimodules, their Fock bimodules 
and the corresponding GNS Hilbert spaces obtained by applying states of $\mA$ \S \ref{sec.0}. 
On these spaces there are well-defined annihilation and creation operators, 
and the "zero particle space" is the GNS space of $\mA$ 
instead of $\bC$ as in usual Fock space.
Annihilation operators --while they perform the usual operation 
of annihilating states defined by $\efh$-- modify (without annihilating) 
states in the GNS space of $\mA$.
Thus in the present paper the term \emph{annihilation operator} should be intended 
in this broader sense.

\medskip 

To define fermionic spaces, one should introduce a permutation symmetry on Fock bimodules.
Since in general this is an impossible task, 
%
%
%
we focus on free Hilbert bimodules
for which it is possible to define a permutation symmetry in the obvious way \S \ref{sec.B}.
This does not necessarily lead to the trivial construction $\mF = \mC_\bsh \otimes \mA$,
because the crucial property to avoid it is non-triviality of the left $\mA$-action.
Having defined our fermionic Fock space, we face the fact that antisymmetry
\begin{equation}
\label{eq.intro.Pauli}
f \otimes_- g \ = \ - g \otimes_- f \ \ \ , \ \ \ f,g \in \efh \, ,
\end{equation}
does not hold in full generality. The reason relies on commutation properties of $\mA$: 
namely, there are suitable support C*-subalgebras $\mA(f),\mA(g)$ of $\mA$, 
and a sufficient condition for (\ref{eq.intro.Pauli}) holding true is that
\begin{equation}
\label{eq.intro.Pauli'}
[ \mA(f) \, , \, \mA(g) ] \ = \ 0
\end{equation}
together with the fact that the left $\mA(g)$-action on $f$ is trivial
and analogously for $\mA(f)$ on $g$, Remark \ref{rem.Pauli}.
In this case we say that $f$ and $g$ are \emph{mutually free}.
 
An other consequence of the fact that we are dealing with Hilbert modules
rather than Hilbert spaces is given by non-stability of the fermionic Fock module
both under the action of creation operators and the left $\mA$-action.
Both these problems are solved by considering a particular class of left $\mA$-actions 
that we call \emph{$\mG$-twists}, where $\mG$ is a group of unitary generators of $\mA$
(Lemma \ref{lem.Gtwist}).
$\mG$-twists are given by group morphisms $u : \mG \to \mU(\bsh)$
that are used to perturb the trivial left $\mA$-action, Def.\ref{def.twist}.
We exhibit two classes of C*-algebras that naturally admit $\mG$-twists:
the first is given by  Weyl algebras (giving rise to generalized free Bosonic fields),
Example \ref{ex.twist2};
the second, Example \ref{ex.twist3}, is the universal C*-algebra of the electromagnetic field
\cite{BCRV}.

\medskip 

Finally in \S \ref{sec.CAR} we construct our generalized CAR algebra having as input the
free module $\efh$ and the twist $u$, Theorem \ref{thm.A0.1}. 
Adding as a third ingredient a suitable conjugation $\kappa$ acting on $\bsh$ we obtain what 
we call a \emph{Dirac triple over $\mA$}, from which we construct a Dirac field (\ref{eq.FF01})
and the corresponding field algebra (\ref{eq.FF02.sd}).
It is this field algebra that defines the desired twisted tensor product of $\mA$ by $\mC_\bsh$. 

As an application we exhibit a family of models in a fixed-time formulation, 
depending on a tempered distribution $\sigma \in \mS'(\bR^3)$ that defines the twist. In our models,
a free Dirac field (generating the C*-algebra $\mC_\bsh$) and a free scalar field
(generating a Weyl C*-algebra $\mW$) give rise to the following situations:
(1) $\mC_\bsh$ and $\mW$ do not commute, but are relatively local (when $\sigma$ is the Dirac delta);
(2) $\mC_\bsh$ and $\mW$ are not relatively local (for $\sigma$ having support with non-empty interior);
(3) $\mW$ is not relatively local to $\mC_\bsh$, but is in the commutant of the fixed-point
algebra of $\mC_\bsh$ under the gauge action (when $\sigma$ is the Lebesgue measure).

\medskip 

A discussion of our results, work in progress and perspectives are given in the final section \S \ref{sec.C}.

\medskip

In the following points we fix some conventions:
\begin{itemize}
\item Throughout the present paper, $\mA$ will denote a C*-algebra with unit ${\bf 1}$
      and $\mS(\mA)$ its state space.
      Unless otherwise stated, 
      Hilbert space representations and *-morphisms are assumed to be non-degenerate (hence unital).
\item We reserve Euler Fraktur fonts for Hilbert modules, related objects 
      ($\efh$, $\efB(\efh)$, $\efF(\efh)$ and so on),
      and objects defined starting from Hilbert modules (for example, the Hilbert spaces $\efh^\omega$
      for $\omega \in \mS(\mA)$).
      Instead, unless otherwise stated the calligrafic font is used for Hilbert spaces 
      ($\mH$) and C*-algebras ($\mA$, $\mB(\mH)$, ...).
      As an exception to this rule, we adopt a lower case bold font for 
      "one-particle Hilbert spaces", that therefore are denoted by $\bsh$.
\item Free Hilbert modules usually appear in literature with the notation 
      $\efh = \bsh \otimes \mA$, where $\bsh$ is a Hilbert space.
      To use the symbol $\otimes$ without ambiguities, in the sequel 
      up to rare exceptions we will write
      $\bsh \otimes \mA  \equiv  \bsh\mA$ and $v \otimes A \equiv vA$ for
      $v \in \bsh$ and $A \in \mA$. The symbol $\otimes$ shall be used for
      the internal tensor product of Hilbert bimodules or the 
      Hilbert space tensor product.
\item For generic elements of $\efh = \bsh\mA$ we will often use the notation 
      of implicit sum for repeated indices, $\sum_i v_i A_i \equiv v_i A_i$.
      The sum may be infinite, and in this case our notation should be understood
      in terms of norm convergence in $\efh$.
\end{itemize}

\section{Fock spaces over C*-algebras}
\label{sec.0}

In the present section we collect some standard facts on Hilbert bimodules that we present in a form 
suitable for our purposes. 
%
%
References are Blackadar's book \cite{Bla},
and the papers \cite{Kas80},\cite{AM19,Pim96,Ske} for GNS-representations and Fock bimodules respectively.
For the use of Hilbert bimodules in mathematical physics 
see \cite{ALV97,Ske,Ske98}, \cite{DR89,DR90},
\cite{BL1,BL2}, and the recent review \cite{AM19}.

\paragraph{Hilbert modules.}
A \emph{(right Hilbert) $\mA$-module} is a complex vector space $\efh$
carrying a right $\mA$-module action and endowed with a $\mA$-valued, strictly positive and 
right $\mA$-linear scalar product $\lb \cdot , \cdot \rb$, 
which induces the norm $\| \lb \cdot , \cdot \rb \|$ under which $\efh$ is a Banach space. 
It is worth noting that then
\begin{equation}
\label{eq.0.00}
A \lb f,g \rb \, = \, 
( \lb g,f \rb A^* )^* \, = \, 
( \lb g,fA^* \rb )^* \, = \, 
\lb fA^* , g \rb \, ,
\end{equation}
for all $f,g \in \efh$, $A \in \mA$.
%
%
An $\mA$-module $\efh$ defines the C*-algebra $\efB(\efh)$ of linear operators
$T : \efh \to \efh$ such that there is $T^* \in \efB(\efh)$ with 
$\lb T^* f, g \rb = \lb f , Tg \rb$, $f,g \in \efh$.
This implies that $T \in \efB(\efh)$ is right $\mA$-linear, \emph{i.e.}
$T(fA) = (Tf)A$, and bounded.
The C*-algebra of compact operators is given by the closed ideal $\efK(\efh) \subseteq \efB(\efh)$
generated by the elementary operators
$|f\rb\lb g| h \doteq f \lb g,h \rb$, $h \in \efh$.
We say that $\efh$ is a \emph{Hilbert $\mA$-bimodule} whenever there is a *-morphism
$\lambda : \mA \to \efB(\efh)$,
called the left action (Hilbert bimodules are also known as \emph{C*-correspondences} \cite{AM19}). 
In the present paper we shall assume that $\lambda$ is faithful and non-degenerate,
and following a standard notation we will write 
\[
Af \, \equiv \, \lambda(A)f
\ \ \ , \ \ \ 
\forall f \in \efh \, , \, A \in \mA 
\, .
\]
Let $\omega \in \mS(\mA)$ be a state. Then $\lb f,g \rb_\omega \doteq \omega(\lb f,g \rb)$
is a scalar product and defines the Hilbert space $\efh^\omega$ whose elements
$f^\omega \in \efh^\omega$ are defined in correspondence with $f \in \efh$.
We have the representation
\begin{equation}
\label{eq.0.01}
\pi^\omega : \efB(\efh) \to \mB(\efh^\omega)
\ \ \ , \ \ \ 
T^\omega f^\omega \, \doteq \, (Tf)^\omega \, .
\end{equation}
The argument to prove that (\ref{eq.0.01}) is well-defined is standard,
so we give just a sketch.
Let $T \in \efB(\efh)$ and $v \in \efh$ such that $v^\omega = 0$, {\it i.e.} $\lb v,v \rb_\omega = 0$; 
then $\| T \|^2 - T^*T$ is a positive element of $\efB(\efh)$ and this 
implies that $\lb v , (\| T \|^2 - T^*T)v \rb \in \mA$ is positive for any $v \in \efh$.
As a consequence 
\[
0 \ = \ 
\| T \|^2 \lb v,v \rb_\omega \ \geq \ 
\lb v , T^*T v \rb_\omega \ = \
\lb Tv , Tv \rb_\omega \ \geq \ 0 \, ,
\]
implying $(Tv)^\omega = 0$,
%
%
and $T^\omega$ is a well-defined linear operator on $\mB(\efh^\omega)$ with norm $\leq \| T \|$.
Note that (\ref{eq.0.01}) restricts to the representation of the left $\mA$-action
\begin{equation}
\label{eq.0.02}
\pi^\omega : \mA \to \mB(\efh^\omega) 
\ \ \ , \ \ \ 
A^\omega f^\omega \, \doteq \, (Af)^\omega \, .
\end{equation}
We call (\ref{eq.0.01}) and (\ref{eq.0.02}) the GNS-representations induced by $\omega$.

\begin{ex}
\label{ex.0.1}
We set $\efh \doteq \mA$ with right (left)
$\mA$-module action given by right (left) multiplication and scalar product
$\lb f,g \rb \doteq f^*g$, $f,g \in \efh$.
If $\omega \in \mS(\mA)$, then it is readily seen that (\ref{eq.0.02}) is the 
GNS representation in the usual sense. For future use, we write $v_A^\omega$
to indicate the GNS vector corresponding to $A \in \mA$, so that 
$\| v_A^\omega \|^2 = \omega(A^*A)$.
\end{ex}

\begin{ex}
\label{ex.0.2}
Let $\efh \doteq \bsh \mA$ denote the free right Hilbert $\mA$-module, 
with right action
$vB \doteq w \otimes AB$
and scalar product 
$\lb v,v' \rb \doteq A^*A' \lb w,w' \rb$,
where $v \doteq w \otimes A$, $v' \doteq w' \otimes A'$, $w,w' \in \bsh$, $A,A',B \in \mA$.
If
$\lambda : \mA \to \efB(\efh) \simeq \mB(\bsh) \otimes \mA$
is a *-morphism, then we have the left $\mA$-action $Af \doteq \lambda(A)f$.
Given $\omega \in \mS(\mA)$, the Hilbert space $\efh^\omega$ 
is isomorphic to $\bsh \otimes \bsh^\omega$, where $\bsh^\omega$
is the GNS space of $\omega$.
\end{ex}

\paragraph{The Fock bimodule.}
Let $n \in \bN$ and $\efh$ denote a Hilbert bimodule.
We consider the tensor product of complex vector spaces $\efh^{\odot n}$, 
that we endow with the scalar product
%
\begin{equation}
\label{eq.B.01}
\lb v , w \rb 
\, \doteq \, 
\lb v_n \, , \, \lb v_{n-1} \, , \, \lb \ldots \lb v_1 , w_1 \rb \ldots \rb w_{n-1} \rb \, w_n \rb 
\, ,
\end{equation}
where $v \doteq v_1 \otimes \ldots \otimes v_n$ and $w$ are in $\efh^{\odot n}$.
Note that the term $\lb v_{n-1} , \ldots w_{n-1} \rb$ appearing in the previous expression
belongs to $\mA$, so it makes sense to consider its left action on $w_n$,
and analogously for the nested terms $\lb v_{n-k} , \ldots w_{n-k} \rb$, $k=2,\ldots,n-1$.
The completion obtained by (\ref{eq.B.01}) is denoted by $\efh^n$ and is called 
the \emph{internal $n$-fold tensor product}: it is endowed with the $\mA$-module actions
\[
Av \doteq Av_1 \otimes \ldots \otimes v_n
\ \ \ , \ \ \ 
vA \doteq v_1 \otimes \ldots \otimes v_nA \, ,
\]
and therefore it forms a Hilbert $\mA$-bimodule. By construction, it turns out
\begin{equation}
\label{eq.B.01'}
\ldots \otimes v_k \otimes A v_{k+1} \otimes \ldots
\, = \, 
\ldots \otimes v_k A \otimes v_{k+1} \otimes \ldots
\ \ \ , \ \ \ 
\forall A \in \mA \, .
\end{equation}
\begin{rem}
\label{rem.nonFock1}
A delicate point of the tensor product of Hilbert bimodules is that
an elementary tensor $v \in \efh^{\odot n}$ may have norm zero even when the all the factors 
$v_1 ,  \ldots , v_n$ do not. For example, if $\mA = C(X)$ is commutative,
$\efh$ is the module of sections of a vector bundle $E \to X$, and $Af = fA$ 
is defined by pointwise multiplication for all $A \in C(X)$ and $f \in \efh$,
then $\lb v,v \rb = \lb v_1,v_1 \rb\lb v_2,v_2 \rb = 0$ 
for $v \doteq v_1 \otimes v_2$ and $\supp(v_1) \cap \supp(v_2) = \emptyset$.
%
%
\end{rem}

The direct sum of Hilbert $\mA$-bimodules $\efh , \efh'$ is defined in the obvious way,
and yields the $\mA$-bimodule $\efh \oplus \efh'$.
Thus we define the \emph{Fock $\mA$-bimodule}
\begin{equation}
\label{eq.0.03}
\efF(\efh) \, \doteq \, \bigoplus_{n=0}^\infty \efh^n \, ,
\end{equation}
with $\efh^0 \doteq \mA$ and $\efh^1 \doteq \efh$; any $v \in \efF(\efh)$ is a sequence
$v = \{ v^n \in \efh^n \}$.
Let now $f \in \efh$, $n \geq 1$, and $v \doteq v_1 \otimes \ldots \otimes v_n \in \efh^n$. We set
\begin{equation}
\label{eq.0.04}
\left\langle f \right| v \, \doteq \, \lb f , v_1 \rb v_2 \otimes \ldots \otimes v_n \, ,
\end{equation}
obtaining a right $\mA$-linear operator 
$\left\langle f \right| : \efh^n \to \efh^{n-1}$.
We define the annihilation operator
\begin{equation}
\label{eq.0.05}
(a(f)v)^{n-1}  \, \doteq \, \sqrt{n} \, \, \left\langle f \right| v^n
\ \ \ , \ \ \
n \geq 1 \, ,
\end{equation}
with domain
$\Dom(a(f)) \doteq \efF^\#(\efh) \doteq
\left\{ v \in \efF(\efh) \,: \, \exists \sum_n n \lb v^n , v^n \rb \in \mA \right\}$.
Next we define the creation operator
\begin{equation}
\label{eq.A0.1.3}
\left\{ 
\begin{array}{l} 
(a^*(f) v)^{n+1} \, \doteq \, 
\sqrt{n+1} \, \, f\otimes v^n \ \ \ , \ \  n \geq 1 \, , 
\\
(a^*(f)v)^1 \doteq fv^0 
\ \ \ \ \ \ \ \ \ \ \ \ \ \ \ \ \ \ \ \ \ \, , \ \ n=0 \, , 
\end{array} 
\right. 
\end{equation} 
for $v \in \Dom(a^*(f)) \doteq \efF^\#(\efh)$. Note that $v^0 \in \mA$, so the expression 
$fv^0 \in \efh$ makes sense. Moreover, if $v^n = 0$ for all $n \geq 1$ then 
$a(f)v = 0$, so that $\mA = \efh^0$ is in the kernel of $a(f)$. We have 
\begin{equation}
\label{eq.CrAn}
a(f) a^*(g) \, = \, (n+1) \lb f,g \rb  \ \ \ , \ \ \ 
a^*(g) a(f) \, = \, n g \otimes \left\langle f \right|  \ \ \ , \ \ \ 
f,g \in \efh \, . 
\end{equation}
The above notation suggests that $a^*(f)$ is the adjoint of $a(f)$ and,
actually, for elementary tensors $v \in \efh^n$, $w \in \efh^{n-1}$, we have 
\begin{equation}
\label{eq.CrAn'}
\begin{array}{l}
\lb v , a^*(f) w \rb  \ = \ 
\sqrt{n} \, \lb v , f \otimes w \rb \ = \\ 
\sqrt{n} \, \lb v_2 \otimes \ldots \otimes v_n \, , \, \lb v_1 , f \rb w \rb \ = \ 
\sqrt{n} \, \lb \lb f , v_1 \rb v_2 \otimes \ldots \otimes v_n \, , \, w \rb \ = \\
\lb a(f) v \, , \, w \rb \, .
\end{array}
\end{equation}
Thus $a^*(f)$ behaves as the adjoint of $a(f)$ on the common domain $\efF^\#(\efh)$.
Note that $a^*(f)$ and $a(f)$ are unbounded, thus we should be careful 
when we use the term \emph{adjoint}.
Yet, since we are interested in the fermionic case
where it will be shown that the creation and annihilation operators are bounded,
we prefer to not discuss this point here.


\paragraph{Hilbert spaces in Hilbert modules.}
A \emph{Hilbert space in $\efh$} is given by a closed vector subspace $\bsh \subset \efh$
such that 
$\lb w , w' \rb \in \bC {\bf 1}$, $\forall w,w' \in \bsh$.
The proof of the following result is trivial, therefore it is omitted.
\begin{lem}
\label{lem.0.1}
Let $\bsh \subset \efh$ be a Hilbert space. 
Then there is an injective linear mapping $\mF(\bsh) \to \efF(\efh)$
preserving the scalar product,
where $\mF(\bsh)$ is the Fock space in the usual sense.
\end{lem}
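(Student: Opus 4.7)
The plan is to construct the embedding level by level. For each $n \geq 0$, I will first define a linear map $\mH^{\odot n}_\bsh \to \efh^{\odot n}$ on the algebraic tensor products sending an elementary tensor $w_1 \otimes \cdots \otimes w_n$ (with $w_i \in \bsh$) to the same elementary tensor, now viewed inside $\efh^{\odot n}$. The key claim is that this algebraic map is isometric, after which passing to completions gives an isometry $\bsh^{\otimes n} \to \efh^n$ and taking direct sums yields the desired map $\mF(\bsh) \to \efF(\efh)$. Injectivity is then automatic from the isometry, since the Fock scalar product on $\mF(\bsh)$ is positive definite.

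The heart of the argument is the verification that formula (\ref{eq.B.01}) collapses to the usual Fock product when all vectors are taken from $\bsh$. Given $v = v_1 \otimes \cdots \otimes v_n$ and $w = w_1 \otimes \cdots \otimes w_n$ with $v_i, w_i \in \bsh$, the hypothesis $\lb v_i, w_i \rb \in \bC\mathbf{1}$ lets me write $\lb v_i, w_i \rb = c_i \mathbf{1}$ with $c_i \in \bC$. By non-degeneracy of the left action, $c_i \mathbf{1}\cdot w_{i+1} = c_i w_{i+1}$, so an easy induction on the nested expression in (\ref{eq.B.01}) gives
\begin{equation*}
\lb v_{k}, \lb \ldots \lb v_1, w_1 \rb \ldots \rb w_{k-1} \rb \ = \ c_1 c_2 \cdots c_k \, \mathbf{1} \, ,
\end{equation*}
and in particular $\lb v,w \rb = (c_1 \cdots c_n) \mathbf{1} = \prod_{i=1}^n \lb v_i, w_i \rb_\bsh \cdot \mathbf{1}$, which under the identification $\bC\mathbf{1} \simeq \bC$ coincides with the scalar product of $v$ and $w$ in $\bsh^{\otimes n}$.

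Having checked isometry on elementary tensors, I extend by bilinearity to the algebraic tensor product $\bsh^{\odot n}$ and then by continuity to the Hilbert space completion $\bsh^{\otimes n}$; the image sits inside the completion $\efh^n$. For $n = 0$ one sends $\lambda \in \bC$ to $\lambda\mathbf{1} \in \mA = \efh^0$, which is trivially isometric. Assembling the pieces for all $n$ produces the required injective isometry $\mF(\bsh) \to \efF(\efh)$.

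The only mildly delicate point is the repeated use of non-degeneracy of the left action (so that $\mathbf{1}$ acts as the identity on $\efh$) in order to slide the scalars $c_i\mathbf{1}$ through the nested inner products; this is however guaranteed by our standing assumption on the left $\mA$-action. No genuine obstacle is expected.
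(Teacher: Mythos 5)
Your argument is correct and is exactly what the paper has in mind: the paper omits the proof as trivial, but your key identity — the collapse of the nested product (\ref{eq.B.01}) to $\prod_i \lb v_i , w_i \rb$ when all factors lie in $\bsh$ — is precisely the computation the author spells out later in the proof of Proposition \ref{prop.0.1}. The remaining steps (extension by linearity and continuity, the $n=0$ component, injectivity from isometry) are handled appropriately.
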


\paragraph{GNS-Hilbert spaces of the Fock bimodule.}
Let now $\omega \in \mS(\mA)$ and $\efh^{n,\omega}$ denote the Hilbert space
obtained by completion of $\efh^n$ under the scalar product $\lb \cdot , \cdot \rb_\omega$.
Moreover, let $\efF^\omega(\efh)$ denote the Hilbert space obtained by the analogous completion of $\efF(\efh)$. 
Basic properties of this Hilbert space are resumed in the following result.
\begin{prop}
\label{prop.0.1}
Let $\efh$ be a Hilbert $\mA$-bimodule and $\omega \in \mS(\mA)$. 
With the above notation, there is a decomposition
\begin{equation}
\label{eq.0.07}
\efF^\omega(\efh) \, \simeq \, \bigoplus_{n=0}^\infty \efh^{n,\omega} \, .
\end{equation}
The component $\efh^{0,\omega}$ is the usual GNS Hilbert space of $\omega$, 
and there is a representation
\begin{equation}
\label{eq.0.07'}
\hat{\pi}^\omega : \mA \to \mB(\efF^\omega(\efh))
\ \ , \ \ 
\hat{\pi}^\omega \, = \, \oplus_n \pi^{n,\omega}
\, ,
 \end{equation}
where each $\pi^{n,\omega}$ is defined as in (\ref{eq.0.02}) and 
$\pi^{0,\omega} : \mA \to \mB(\efh^{0,\omega})$ is the usual GNS representation.
Given a Hilbert space $\bsh \subset \efh$, 
the closed vector subspace of $\efF^\omega(\efh)$ spanned
by elements of the type $(wA)^\omega$, $w \in \bsh^n$, $A \in \mA$, $n \in \bN$,  
is a Hilbert space isomorphic to $\mF(\bsh) \otimes \efh^{0,\omega}$, 
where $\mF(\bsh)$ is the Fock space in the usual sense,
so that there is an embedding
\begin{equation}
\label{eq.0.07''}
\iota : \mF(\bsh) \otimes \efh^{0,\omega} \to \efF^\omega(\efh) 
\ \ , \ \ 
\iota(w \otimes v_A^\omega) \, \doteq \, (wA)^\omega
\, .
\end{equation}
%
\end{prop}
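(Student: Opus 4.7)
The plan is to handle the four claims in turn, with the first three being essentially formal and the last requiring a short inner-product calculation.

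First, the scalar product on $\efF(\efh) = \bigoplus_n \efh^n$ splits over degree, so after applying $\omega$ vectors of distinct degree remain orthogonal and the GNS completion is the Hilbert space direct sum $\bigoplus_n \efh^{n,\omega}$, giving (\ref{eq.0.07}). Next, since $\efh^0 = \mA$ with scalar product $\lb A,B \rb = A^*B$ as in Example~\ref{ex.0.1}, one has $\lb A,B \rb_\omega = \omega(A^*B)$, which is the standard GNS pre-inner product, so $\efh^{0,\omega}$ coincides with the usual GNS Hilbert space of $\omega$. For $n \geq 1$ the representation $\pi^{n,\omega}$ is given directly by (\ref{eq.0.02}) applied to the left action on $\efh^n$, and for $n = 0$ the same formula reproduces the usual GNS representation of $\mA$. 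Each $\pi^{n,\omega}(A)$ is bounded by $\|A\|$, so the direct sum $\hat\pi^\omega = \bigoplus_n \pi^{n,\omega}$ is a well-defined bounded representation on $\efF^\omega(\efh)$, establishing (\ref{eq.0.07'}).

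The substantive step is the embedding (\ref{eq.0.07''}). The key observation is that if $\bsh \subset \efh$ is a Hilbert space then $\lb w_i, w_j' \rb \in \bC \mathbf{1}$ for every $w_i, w_j' \in \bsh$. Unraveling the nested definition (\ref{eq.B.01}) from the innermost slot outward, and using (\ref{eq.B.01'}) to slide scalars across tensor factors, for elementary tensors $w, w' \in \bsh^n$ and $A, A' \in \mA$ I obtain
\[
\lb wA, w'A' \rb \ = \ A^* \lb w, w' \rb A' \ = \ \langle w, w' \rangle_{\mF(\bsh)} \, A^* A' \, ,
\]
where $\langle \cdot, \cdot \rangle_{\mF(\bsh)}$ denotes the standard Fock scalar product on $\bsh^{\otimes n}$. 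Applying $\omega$ then gives
\[
\lb (wA)^\omega, (w'A')^\omega \rb \ = \ \langle w, w' \rangle_{\mF(\bsh)} \, \omega(A^*A') \, ,
\]
which is precisely the inner product of $w \otimes v_A^\omega$ and $w' \otimes v_{A'}^\omega$ in $\mF(\bsh) \otimes \efh^{0,\omega}$.

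The main point requiring care is the well-definedness of $\iota$: the factorization of an element of $\efh^n$ as $wA$ is highly non-unique, since by (\ref{eq.B.01'}) any factor may absorb $A$. I would view $\iota$ as defined on the external tensor product $\mF(\bsh) \otimes_\bC \efh^{0,\omega}$, so the only ambiguity to resolve is that $v_A^\omega = v_B^\omega$ must force $(wA)^\omega = (wB)^\omega$; this follows at once from the displayed identity, since $\omega((A-B)^*(A-B)) = 0$ implies $\lb w(A-B), w(A-B) \rb_\omega = 0$. The inner-product computation then shows $\iota$ is isometric on a dense subspace of $\mF(\bsh) \otimes \efh^{0,\omega}$, and it extends continuously to the desired embedding whose range is, by construction, the closed span of the vectors $(wA)^\omega$.
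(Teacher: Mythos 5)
Your proof is correct and follows essentially the same route as the paper's: the degree decomposition and the identification of $\efh^{0,\omega}$ are treated as formal consequences of the definitions, and the embedding rests on the same key computation $\lb wA , w'A' \rb = A^* \prod_k \lb w_k , w'_k \rb A'$ obtained by collapsing the nested scalar product using $\lb w_k , w'_k \rb \in \bC$. Your additional check that $\iota$ is well defined (independence of the representative $A$ of $v_A^\omega$) is a small point the paper leaves implicit, but it does not change the argument.
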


\begin{proof}
The direct sum decomposition (\ref{eq.0.07}) is obvious.
About $\efh^{0,\omega}$, by definition it is given by the completion of $\efh^0 = \mA$
under the scalar product $\omega(A^*A')$, $A,A' \in \mA$, so it is the GNS-space of $\omega$. 
The decomposition (\ref{eq.0.07'}) trivially follows by the previous points, 
as well as the fact that $\pi^{0,\omega}$ is the GNS representation of $\omega$.
%
%
Finally, the embedding (\ref{eq.0.07''}) follows by the fact that,
given elementary tensors $w,w' \in \bsh^n$, $A,A' \in \mA$,
\[
\lb wA , w'A' \rb \ = \ A^* \, \prod_k \lb w_k , w'_k \rb \, A' \, , 
\]
having used the fact that
\[
\lb w_n , \lb w_{n-1} , \lb \ldots \lb w_1 , w'_1 \rb \ldots \rb w'_{n-1} \rb w'_n \rb
\ = \ 
\prod_k \lb w_k , w'_k \rb
\]
(recall that $\lb w_k , w'_k \rb \in \bC$ for all $k$).
\end{proof}

\begin{rem}[Non-Fock nature of $\efF^\omega(\efh)$]
\label{rem.prop.01}
We emphasize that in general $\efh^{n,\omega}$ is not the $n$-fold tensor product $(\efh^\omega)^{\otimes n}$
in the sense of Hilbert spaces, and as a consequence $\efF^\omega(\efh)$ is not the Fock space of $\efh^\omega$.
This is readily seen by comparing on elementary tensors $v,w \in \efh^n$ 
the scalar product obtained by (\ref{eq.B.01}) ,
\begin{equation}
\label{eq.1.rem.prop.01}
\lb v^\omega , w^\omega \rb
\, \doteq \, 
\lb v_n \, , \, \lb v_{n-1} \, , \, \lb \ldots \lb v_1 , w_1 \rb \ldots \rb w_{n-1} \rb \, w_n \rb_\omega 
\end{equation}
with the scalar product of $(\efh^\omega)^{\otimes n}$,
\begin{equation}
\label{eq.2.rem.prop.01}
\lb v^\omega , w^\omega \rb' \ \doteq \ \prod_k \lb v_k , w_k \rb_\omega \, ,
\end{equation}
the latter corresponding to the Fock space of $\efh^\omega$. 
%
%
\end{rem}


\begin{rem}
\label{rem.prop.01''}
Let $\bsh \subset \efh$ be a Hilbert space.
Then the embedding of $\mF(\bsh)$ into $\efF(\efh)$ (Lemma \ref{lem.0.1})
factorizes through the isometric mapping 
\[
\mF(\bsh) \simeq \mF(\bsh) \otimes \Omega  \to  \efF^\omega(\efh) 
\ \ \ , \ \ \ 
w \otimes \Omega \mapsto w^\omega \, ,
\]
where $\Omega \in \efh^{0,\omega}$ is the GNS-vector ("vacuum")
obtained by (\ref{eq.0.07''}) for $A \equiv {\bf 1}$.
Thus $\mF(\bsh)$ embeds in any GNS space $\efF^\omega(\efh)$.
\end{rem}


\paragraph{Creation and annihilation operators in $\efF^\omega(\efh)$.}
Let $f \in \efh$ and $n \in \bN$. Then the mapping
$w \mapsto f \otimes w$
defines a right $\mA$-linear operator $T(f) : \efh^n \to \efh^{n+1}$, and since
\[
\lb (T(f) w \, , \, v \rb \ = \ 
\lb f \otimes w \, , \, v \rb \ = \ 
\lb w \, , \, \lb f,v_1 \rb (v_2 \otimes \ldots \otimes v_n) \rb \ = \ 
\lb w \, , \, \lb f | v \rb 
\, ,
\]
we find that $T(f)$ has adjoint $T(f)^* = \lb f |$ and, as a consequence, 
is bounded \cite[Prop.13.2.2]{Bla} (this also implies that $\lb f |$ is bounded and adjointable).
Therefore, the argument used to construct the representation (\ref{eq.0.02}) 
allows to define the linear operators
\[
T(f)^\omega : \efh^{n,\omega} \to \efh^{n+1,\omega}
\ \ \ , \ \ \ 
\lb f |^\omega : \efh^{n+1,\omega} \to \efh^{n,\omega}
\, .
\]
Rescaling these operators by the $\sqrt{n}$ factors in dependence of the order of the
tensor powers we obtain the creation and annihilation operators
\begin{equation}
\label{eq.0.11}
(a^\omega(f) v^\omega)^{n-1} \ \doteq \ 
\sqrt{n} \, \lb f |^\omega v^\omega \ = \ 
\sqrt{n} \, (\lb f | v)^\omega \, ,
\end{equation}

\begin{equation}
\label{eq.0.12}
\left\{
\begin{array}{l}
(a^{*,\omega}(f) v^\omega)^{n+1} \, \doteq \, 
\sqrt{n+1} \, \, (f \otimes v^n)^\omega \ \ \ , \ \  n \geq 1 \, ,
\\ \\ 
(a^{*,\omega}(f)v)^1 \, \doteq \, (fv^0)^\omega
\ \ \ \ \ \ \ \ \ \ \ \ \ \ \ \ \ \ \ \ \ \, \, , \ \ n=0 \, ,
\end{array}
\right.
\end{equation}
where $v^\omega = \{ v^{n,\omega} \}$ is defined for $v \in \efF^\#(\efh)$.
Clearly $a^{*,\omega}(f)$ is (contained in) the adjoint of $a^\omega(f)$ in the sense of Hilbert spaces,
%
%
\[
\begin{array}{lcl}
\lb w^{\omega,n-1} \, , \, (a^\omega(f) v^\omega)^{n-1} \rb_\omega & = & 
\sqrt{n} \, \lb w^{n-1} \, , \, \lb f,v_1 \rb (v_2 \otimes \ldots \otimes v_n) \rb_\omega \ = \\ & = &
\sqrt{n} \, \lb f \otimes w^{n-1} \, , \, v_1 \otimes \ldots \otimes v_n \rb_\omega \ = \\ & = &
\lb (a^{*,\omega}(f) w^\omega)^n \, , \, v^\omega \rb_\omega \, . 
\end{array}
\]
Note that since $a^*(f)$ may have non-trivial kernel (Remark \ref{rem.nonFock1}), 
the same is true for $a^{*,\omega}(f)$.

\section{Twisted module actions and permutation symmetry}
\label{sec.B}

In the present section we study the permutation symmetry on tensor powers of Hilbert bimodules
or, to be honest, the factors that in general prevent the realization of this property.
These factors are easily identified and are
non-commutativity of $\mA$ and the fact that in general
$Af \neq fA$, $A \in \mA$, $f \in \efh$.
%
%
Anyway even with the above limitations we shall give a version of the fermionic Fock space
for free Hilbert bimodules,
and define the corresponding restrictions of the creation and annihilation operators.
The tool used to perform these constructions is the one of \emph{twist},
namely a "perturbation" of the trivial left $\mA$-module action
defined using a group of unitary generators of $\mA$.
We remark that thanks to the Kasparov stabilization theorem \cite{Bla}
the choice of considering only free bimodules is not a severe requirement.

\medskip 

Let us now consider an elementary tensor $v = v_1 \otimes \ldots \otimes v_n \in \efh^n$. 
Given a permutation $\varrho \in \bP(n)$, we would like to define the operator
\begin{equation}
\label{eq.PS.01}
U_\varrho v \ \doteq \ v_{\varrho(1)} \otimes \cdots \otimes v_{\varrho(n)} \, .
\end{equation}
It is obvious that the above operator is well-defined on the algebraic elementary
tensors $v_1 \odot \ldots \odot v_n$, thus the question is what happens when one
applies the norm induced by $\mA$.
Here we immediately encounter a problem, illustrated by the following example.
Consider a Hilbert bimodule $\efh$ of the type Example \ref{ex.0.1} with $\efh = \mA = \mO_2$, 
the Cuntz algebra generated by mutually orthogonal isometries $\psi_1 , \psi_2$; 
we take $v_1 \doteq \psi_1$, $v_2 \doteq \psi_2^*$ and evaluate the scalar products
%
%
\[
\lb v_1 \otimes v_2 \, , \, v_1 \otimes v_2 \rb \ = \ 
\lb v_2 , \lb v_1 , v_1 \rb v_2 \rb \ = \ 
\psi_2 \psi_1^* \psi_1 \psi_2^* \ = \ 
\psi_2 \psi_2^* \, ,
\]
\[
\lb v_2 \otimes v_1 \, , \, v_2 \otimes v_1 \rb \ = \
\lb v_1 , \lb v_2 , v_2 \rb v_1 \rb \ = \ 
\psi_1^* \psi_2 \psi_2^* \psi_1 \ = \ 0 \, .
\]
This example shows that the above operators $U_\varrho$ in general are ill-defined,
because they map zero norm algebraic elementary tensors into tensors with norm $\neq 0$.
This also spoils the preservation of compositions of permutations in (\ref{eq.PS.01}).
For example, given the flip $\varrho(1) = 2$, $\varrho(2) = 1$, in the situation
of the previous example we find
\[
\| U_\varrho (v_2 \otimes v_1) \|  =  \| v_1 \otimes v_2 \| \neq 0
\ \ \ , \ \ \ 
U_\varrho U_\varrho (v_1 \otimes v_2)  =  0
\ \ \ , \ \ \ 
U_{\varrho \varrho} (v_1 \otimes v_2)  =  v_1 \otimes v_2 \, .   
\]

\paragraph{Free modules.}
In the language adopted in the present paper, a Hilbert bimodule $\efh$
is free if, and only if, there is a Hilbert space $\bsh \subset \efh$ 
such that
\begin{equation}
\label{eq.fM.01}
\efh \ = \ \bsh \mA \ \doteq \ 
{\mathrm{closed \ span}} \{ wA \, : \, w \in \bsh \, , \, A \in \mA \} \, .
\end{equation}
%


\begin{lem}
\label{lem.fM.01}
Let $\efh$ be a free Hilbert bimodule. Then there is an isomorphism of right
Hilbert modules $\efF(\efh) \simeq \mF(\bsh)\mA$ and, given a state
$\omega \in \mS(\mA)$, there is an isomorphism of Hilbert spaces
$\efF^\omega(\efh) \simeq \mF(\bsh) \otimes \efh^{0,\omega}$
where $\efh^{0,\omega}$ is the usual GNS Hilbert space.
The left $\mA$-module action $\lambda$ defines by evaluation the representation
\begin{equation}
\label{eq.fM.0a}
\lambda^\omega : \mA \to \mB( \mF(\bsh) \otimes \efh^{0,\omega} ) \, .
\end{equation}
\end{lem}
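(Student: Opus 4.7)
The plan is to establish the three claims level-by-level: first identify each internal tensor power $\efh^n$ with the free right module $\bsh^{\otimes n}\mA$, then assemble these identifications into the Fock-level isomorphism, and finally apply the GNS construction associated to $\omega$ to pass to Hilbert spaces.

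For the identification at level $n$, I would define a right $\mA$-linear map
\[
\phi_n : \bsh^{\otimes n}\mA \to \efh^n
\ \ , \ \
(w_1 \otimes \cdots \otimes w_n) A \, \mapsto \, w_1 \otimes \cdots \otimes w_{n-1} \otimes (w_n A) \, ,
\]
and check that it is isometric. This is where freeness does the work: since $\lb w_k , w'_k \rb \in \bC {\bf 1}$ for $w_k,w'_k \in \bsh$, the nested inner product (\ref{eq.B.01}) collapses, exactly as in the last display of the proof of \propref{prop.0.1}, to
\[
\lb (w \otimes \cdots \otimes w_n) A , (w'_1 \otimes \cdots \otimes w'_n) A' \rb
\, = \,
A^* \!\left( \prod_k \lb w_k , w'_k \rb \right)\! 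A' \, ,
\]
which coincides with the scalar product of $\bsh^{\otimes n}\mA$ viewed as a free Hilbert module over $\mA$.

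The more delicate point is surjectivity (really: density of the image), and I expect this to be the main obstacle, because the nontrivial left action may take elements of $\bsh$ outside of $\bsh$ and into $\efh = \bsh\mA$. The argument is to reduce a generic elementary tensor $v_1 \otimes \cdots \otimes v_n \in \efh^{\odot n}$ to the canonical form $\phi_n((w_1 \otimes \cdots \otimes w_n) A)$ by repeated application of (\ref{eq.B.01'}): writing each $v_k = \sum_j w_{k,j} A_{k,j}$ with $w_{k,j} \in \bsh$, one absorbs the $A$-factors to the right, resolving each expression $A w_{k+1,j}$ (an element of $\efh = \bsh\mA$) as a norm-convergent sum of the form $\sum_i w'_i B_i$ and iterating. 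This yields a direct sum isomorphism $\efF(\efh) \simeq \mF(\bsh)\mA$ of right Hilbert modules.

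For the GNS-level statement, I would apply $\omega$ to both sides of the inner product identity above and note that the scalar factor $\prod_k \lb w_k, w'_k \rb$ commutes freely, so
\[
\lb wA , w'A' \rb_\omega
\, = \,
\omega(A^*A') \prod_k \lb w_k , w'_k \rb
\, = \,
\lb w , w' \rb_{\mF(\bsh)} \, \lb v^\omega_A , v^\omega_{A'} \rb \, .
\]
This shows that the embedding $\iota$ of (\ref{eq.0.07''}) becomes an isometry onto a dense subspace, and the density just proved for $\phi_n$ upgrades $\iota$ to a unitary isomorphism $\efF^\omega(\efh) \simeq \mF(\bsh) \otimes \efh^{0,\omega}$. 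The representation (\ref{eq.fM.0a}) is then obtained by transporting $\hat\pi^\omega$ of (\ref{eq.0.07'}) along this isomorphism, i.e.\ $\lambda^\omega(A) \doteq \iota^{-1} \hat\pi^\omega(A) \iota$, which is well-defined and bounded by the argument already used in (\ref{eq.0.01}).
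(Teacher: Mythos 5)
Your proposal is correct and follows essentially the same route as the paper: identify each $\efh^n$ with $\bsh^n\mA$ by decomposing vectors of $\efh$ along $\bsh$ and shifting the $\mA$-factors to the right via (\ref{eq.B.01'}), then invoke Proposition \ref{prop.0.1} for the GNS-level isomorphism and the representation. The only difference is one of detail: you make explicit the isometry check and the iterative absorption argument that the paper compresses into ``it is easily seen that $v \in \bsh^n\mA$''.
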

\begin{proof}
Let $n \in \bN$ and
$v = v_1 \otimes \ldots \otimes v_n \in \efh^n$; 
then by (\ref{eq.fM.01}) we may write $v_i = w_k A_{k;i}$, 
where $\{ w_k \}$ is an orthonormal basis of $\bsh$, $A_{k;i} \doteq \lb w_k , v_i \rb \in \mA$,
and the (possibly infinite) sum on the repeated index $k$ is performed.
Using the above decomposition, it is easily seen that $v \in \bsh^n\mA$ 
and we conclude that $\efh^n \subseteq \bsh^n\mA$.
Since the opposite inclusion is obvious, we have $\efh^n = \bsh^n\mA$,
and this proves that $\efh^n \simeq \bsh^n\mA$ implying $\efF(\efh) \simeq \mF(\bsh)\mA$. 
Thus $\efF^\omega(\efh) \simeq \mF(\bsh) \otimes \efh^{0,\omega}$ by Proposition \ref{prop.0.1}.
\end{proof}

We shall see in the following sections that
in general $\lambda^\omega(\mA)$ is not in the commutant of $\mB(\mF(\bsh))$
in $\mB( \mF(\bsh) \otimes \efh^{0,\omega} )$;
moreover, operators of the type $\lambda^\omega(A)$, $A \in \mA$, 
mix vectors of $\mF(\bsh)$ and $\efh^{0,\omega}$
in the sense that, given the GNS-vector $\Omega \doteq v_{\bf 1}^\omega \in \efh^{0,\omega}$, 
typically we will find
$\lambda^\omega(A) (v \otimes \Omega) = \sum_k v_k \otimes v^0_k$
with $v_k \neq v$ and $v^0_k \neq \Omega$.
In contrast, let us define the left $\mA$-module action
\begin{equation}
\label{eq.trivial}
A (wB) \ \doteq \ wAB
\ \ \ , \ \ \ 
w \in \bsh \, , \, A,B \in \mA
\end{equation}
(in standard notation, $A (w \otimes B) \doteq w \otimes AB$).
We call (\ref{eq.trivial}) the \emph{trivial left action}.
At the level of Fock space, it does not induce a mixing because
$\lambda(A) = 1 \otimes A$,
and clearly $\lambda^\omega(\mA)$ and $\mB(\mF(\bsh))$ commute for any 
$\omega \in \mS(\mA)$.

\paragraph{Twists and left actions.}
The following notion concerns a class of left actions
well-behaved with respect to the permutation symmetry,
obtained by twisting the trivial one by means of a group action.
\begin{defn}
\label{def.twist}
Let $\mG \subseteq \mU\mA$ be a group generating $\mA$ as C*-algebra
and $\efh = \bsh \mA$ a free bimodule.
Then the left $\mA$-action $\lambda$ is said to be $\mG$-\textbf{twisted} whenever 
there is a group morphism $u : \mG \to \mU(\bsh)$, 
that we call the $\mG$-\textbf{twist}, such that
\begin{equation}
\label{eq.def.twist}
\lambda(\gamma)w \equiv \gamma w \, = \, (u_\gamma w) \gamma 
\ \ , \ \ 
\forall \gamma \in \mG \, , \, w \in \bsh \, .
\end{equation}
We say that the $\mG$-twist is \textbf{trivial} whenever $u$ is the 
trivial representation.
\end{defn}

%
For reader's convenience we check the consistence of the previous definition by verifying 
that the r.h.s. of (\ref{eq.def.twist}) defines an adjointable 
(and as a consequence $\mA$-linear and bounded) operator.
We write 
$wA , w'A' \in \efh$ for $w,w' \in \bsh$, $A,A' \in \mA$, 
and compute
\[
\lb wA , \gamma w'A' \rb \ = \ 
A^* \lb w , u_\gamma w' \rb \gamma A' \ = \ 
(\gamma^* A)^* \lb u_\gamma^* w , w' \rb A' \ = \ 
\lb \gamma^* wA , w'A' \rb
\]
(note that $\lb w , u_\gamma w' \rb \in \bC$);
thus (\ref{eq.def.twist}) has adjoint $\gamma^*$ as expected.
We remark that since $\mG$ generates $\mA$ the left action $\lambda$ is
determined by $u$. Despite that, $u$ may have a kernel even when $\lambda$ does not
(as shall be evident in the following example, where $\mA$ can be simple).
Finally we note that in the previous definition we 
did not assume strong continuity of $u$, so in general it is not
a unitary representation; therefore we say that $u$ is a 
\emph{unitary morphism}. 

\medskip 

\begin{ex}[Weyl algebras]
\label{ex.twist2}
Let $\mS$ be a real vector space with a symplectic form $\eta$ and 
$\mW$ denote the associated Weyl C*-algebra generated by 
unitary symbols $W_s$, $s \in \mS$.
Let $\mG \subseteq \mU\mW$ denote the group generated by $W_s e^{1/2 i\theta}$
for $s \in \mS$ and $\theta \in \bR$. 
%
%
The Weyl relations imply that an Abelian quotient of $\mG$ 
is given by $\mS$ as an additive group. Thus any unitary morphism
$u_{ab} : \mS \to \mU(\bsh)$ lifts to a morphism $u : \mG \to \mU(\bsh)$ such that
$u(W_s) u(W_{s'}) = u(W_{s+s'})$
{\footnote{
As a matter of fact, such unitary morphisms can be easily obtained, 
for example by exponentials of linear functionals $\rho : \mS \to \bR$.
}}.
We then consider the free Hilbert module $\efh \doteq \bsh\mW$, and set
$W_s^\lambda (wB) \doteq (u(W_s) w) \, W_s B$, 
$s \in \mS$, $w \in \bsh$, $B \in \mW$,
obtaining unitary operators $W_s^\lambda \in \efB(\efh)$.
Since
\[
\begin{array}{lcl}
W_s^\lambda W_{s'}^\lambda wB & = &
( u(W_s) u(W_{s'})w ) \, W_s W_{s'}B \ = \\ & = &
e^{1/2i\eta(s,s')} \, ( u(W_{s+s'})w ) \, W_{s+s'}B \ = \\ & = &
e^{1/2i\eta(s,s')} \, W_{s+s'}^\lambda \, wB \, ,
\end{array}
\]
we have that $W_s^\lambda$ fulfil the Weyl relations,
so they define a *-morphism $\lambda : \mW \to \efB(\efh)$
that by construction is a left $\mW$-action twisted by $u$.
\end{ex}

\begin{ex}[The universal C*-algebra of the electromagnetic field]
\label{ex.twist3}
Let $k \in \bN$ and $\mD_k(\bR^4)$ denote the vector space of smooth, compactly supported
$k$--forms on $\bR^4$ (with Minkowski metric). 
Let $\mC_1(\bR^4) \subset \mD_1(\bR^4)$ be the subspace of 1--forms
$g = (g^\mu) \in \mD_1(\bR^4)$ such that $\delta g \doteq \partial_\mu g^\mu = 0$.
Then $\delta f \doteq -2 \partial_\nu f^{\mu\nu}$
defines a 1--form $\delta f \in \mC_1(\bR^4)$
for any $f = (f^{\mu\nu}) \in \mD_2(\bR^4)$. 
With this notation, we define the C*-algebra $\mA$ generated by 
the group $\mG$ of unitary symbols 
$V(g)$, $g \in \mC_1(\bR^4)$,
with relations
\begin{equation}
\label{sec2.01a}
V(a_1g) V(a_2g) = V((a_1 + a_2) g) \ \ , \ \ 
V(g)^* = V(-g) \ \ , \ \ 
V(0) = 1 \, ,
\end{equation}
\begin{equation}
\label{sec2.01b}
V(\delta f_1) V(\delta f_2) = V(\delta f_1 + \delta f_2 )
\ \ , \ \ 
\supp f_1 \perp \supp f_2 \, ,
\end{equation}
\begin{equation}
\label{sec2.01c}
\left[ V(g_1) , V(g_2) \right]_\bullet \, \in \mA \cap \mA'
\ \ , \ \ 
\supp g_1  \perp  \supp g_2 
\, ,
\end{equation}
\noindent where $a_1 , a_2 \in \bR$. In the above expressions we used  
the symbol $\perp$ to indicate spacelike separation, whilst
$\left[ U , V \right]_\bullet \doteq UV U^* V^*$
is the group commutator and $\mA \cap \mA'$ is the centre of $\mA$.
Any Wightman field $F(f)$, $f \in \mD_2(\bR^4)$, 
fulfilling the Maxwell equations, defines a representation 
$\pi$ of $\mA$ such that
$\pi(V(\delta f)) = e^{iF(f)}$ \cite{BCRV}.
Note that since $\mC_1(\bR^4)$, as an additive group,
is a quotient of $\mG$ under the map $V(g) \mapsto g$,
any unitary morphism of $\mC_1(\bR^4)$
induces a unitary morphism of $\mG$.
Let now $u : \mC_1(\bR^4) \to \mU(\bsh)$ be a unitary morphism;
we set $\efh \doteq \bsh\mA$ and define
$V^\lambda(g) \, wA \doteq (u_g w) V(g)A$, $w \in \bsh$, $A \in \mA$.
A straightforward check shows that the operators $V^\lambda(g)$ are in $\efB(\efh)$
and fulfil (\ref{sec2.01a}-\ref{sec2.01c}): for example, about (\ref{sec2.01b})
we have
\[
\begin{array}{lcl}
V^\lambda(\delta f_1) V^\lambda(\delta f_2) \, wA & = & 
(u_{\delta f_1} u_{\delta f_2} w) \, V(\delta f_1) V(\delta f_2) A \ = \\ & = & 
(u_{\delta f_1 + \delta f_2} w ) \, V( \delta f_1 + \delta f_2 ) A \ = \\ & = & 
V^\lambda( \delta f_1 + \delta f_2 ) \, wA \, .
\end{array}
\]
Thus we have the left action 
$\lambda : \mA \to \efB(\efh)$, $\lambda(V(g)) \equiv V^\lambda(g)$,
that by construction is $\mG$-twisted.
\end{ex}

\paragraph{Permutation symmetry.}
Lemma \ref{lem.fM.01} allows to define a permutation symmetry in the obvious way, 
by extending the one defined on $\mF(\bsh)$:
\begin{equation}
\label{eq.fM.06}
U_\varrho (wA) \, \doteq \, w_\varrho A 
\ \ \ , \ \ \ 
\varrho \in \bP(n) \, , \, w \in \bsh^n \, , \, n \in \bN \, ,
\end{equation}
where $w_\varrho \in\bsh^n$ is the vector transformed under the usual permutation symmetry in Fock space.
Of course, to apply the previous definition we must express a tensor of the type
$v_1 \otimes \ldots \otimes v_n$, $v_1 , \ldots , v_n \in \efh$,
in terms of tensors of the type 
$(w_1 \otimes \ldots \otimes w_n) A$, $w_1 , \ldots , w_n \in \bsh$, $A \in \mA$.
By definition $U_\varrho$ is unitary on $\efF(\efh)$ in the sense of right Hilbert modules,
thus we get the spectral projection $P_- \doteq \oplus_n P_-^n \in \efB(\efF(\efh))$,
$P_-^n \doteq (n!)^{-1} \sum_\varrho \eps_\varrho U_\varrho$,
where $\eps_\varrho$ is the sign of $\varrho \in \bP(n)$.
We write $\efF_-(\efh) \doteq P_-\efF(\efh)$; it is obvious that $\efF_-(\efh)$ is a 
right $\mA$-module, and that it is free with
$\efF_-(\efh) \simeq \mF_-(\bsh) \otimes \mA$
where $\mF_-(\bsh)$ is the ordinary fermionic Fock space.
We have the decomposition
\begin{equation}
\label{eq.A.09}
\efF_-(\efh) \ \doteq \
\bigoplus_{n \geq 0} \efh^n_- \ \simeq \
\bigoplus_{n \geq 0} \, ( \bsh^n_- \mA )
\, ,
\end{equation}
where $\efh^n_- \doteq P_-^n \efh^n$, $\bsh^n_- \doteq P_-^n \bsh^n$. 
Of course, $\efh^0_- = \mA$ and $\efh^1_- = \efh \simeq \bsh\mA$.

\medskip

\begin{rem}[The Pauli principle]
\label{rem.Pauli}
In spite of the simplicity of our definition
some care is needed to handle antisymmetric tensors. 
For example, if $f,g \in \efh$ then it is not ensured that 
$P_-(f \otimes g) = - P_-(g \otimes f)$, 
unless $f$ and $g$ belong to $\bsh$.
In particular, the Pauli principle 
\begin{equation}
\label{eq.Pauli}
P_-(f \otimes f) \ = \ 0 \ \ \ , \ \ \ f \in \efh \, ,
\end{equation}
does not hold in general, and \textbf{a priori} its validity is ensured 
only for $f \in \bsh$.
A more general class of examples for which the Pauli principle holds is the following.
Given $f \in \efh$, we define the \textbf{support}
\begin{equation}
\label{eq.Pauli2}
\mA(f) \, \doteq \, C^* \{ \lb v , f \rb \in \mA \, : \, v \in \bsh \} \, \subseteq \mA \, .
\end{equation}
The support has the property that if $f = f_h A_h$, where $\{ f_h \}  \subset \bsh$
is an orthonormal set and $A_h \in \mA$, then $A_h = \lb f_h , f \rb \in \mA(f)$ 
for all $h \in \bN$.
%
%
%
Given $f,g \in \efh$, we write 
\begin{equation}
\label{eq.Pauli.perp}
f \bowtie g \ \ \stackrel{.}{\Leftrightarrow} \ \  
fB' = B'f \ \ , \ \ gB = Bg  \ \  , \ \ \left[ B , B' \right] \ = \ 0 \, ,
\end{equation}
for all $B \in \mA(f) \, , \, B' \in \mA(g)$.
In this case we say that $f$ and $g$ are \textbf{mutually free}, and we find
\[
\begin{array}{lcl}
P_-(f \otimes g) & = &
P_-( f_h \otimes A_hg) \ = \ 
P_-( f_h \otimes g) A_h \ = \\ & = &
P_-( f_h \otimes g_k) \,  A'_k A_h  \ = \ 
- P_-( g_k \otimes f_h) \, A'_k A_h  \ = \\ & = &
- P_-( g_k \otimes f_h) \, A_h A'_k   \ = \ 
- P_-( g_k \otimes f) \, A'_k   \ = \\ & = &
- P_-(g \otimes f) \, ,
\end{array}
\]
so that
\begin{equation}
\label{eq.Pauli'}
P_-(f \otimes g) \ = \ - P_-(g \otimes f) 
\ \ , \ \ 
f \bowtie g
\, .
\end{equation}
We conclude that validity of the Pauli principle is related to commutation properties
of $\mA$ (\textbf{locality}, when $\mA$ is the C*-algebra of a Haag-Kastler net),
and properties of the left $\mA$-action, that is, the eventuality that
it is trivial on the involved vectors of $\efh$ and elements of $\mA$. 
Note for $f \in \bsh$ we have $\mA(f) = \bC$ so that
$f,g \in \bsh$ implies $f \bowtie g$.
\end{rem}

\paragraph{Fermionic creation and annihilation operators.}
To define and handle fermionic creation and annihilation operators
we make some remarks on elementary tensors, using the attention needed in the 
case of Hilbert bimodules.

We start by noting that we may arrange order $n$ permutations by considering
for any $k=1,\ldots,n$ the set of those permutations that bring 
the $k$-th object at first position.
Thus for any elementary tensor $v = wA \in \efh^n$, $w \in \bsh^n$, $A \in \mA$, 
we may write
\begin{equation}
\label{eq.A0.1.PS1}
v_- \ \doteq \ P_-^n v \ = \ 
\frac{1}{n} \sum_{k=1}^n (-1)^{k-1} \, w_k \otimes w^{(k)}_- A
\, \in \efh^n_- \ ,
%
\end{equation}
\begin{align*}
w^{(k)}_- \, \doteq \, 
\ldots \otimes_- \hat{w}_k  \otimes_- \ldots =
\frac{1}{(n-1)!} \sum_{\varrho \in \bP_{n-1,k}} 
\eps_\varrho \, w_{\varrho(1)} \otimes \ldots \otimes w_{\varrho(n)} 
\, \in \bsh^{n-1}_- \, ,
\end{align*}
where the notation $\hat{w}_k$ indicates that $w_k$ does not appear in the tensor
%
%
and $\bP_{n-1,k}$ is understood as the permutation group of the set 
$\{ 1 , \ldots , n \} \setminus \{ k \}$; the term $(-1)^{k-1}$ in (\ref{eq.A0.1.PS1})
is the sign of the transposition bringing $w_k$ at first position.
%
%
The above expression makes manifest that in general $\efF_-(\efh)$ is not stable under
the left $\mA$-action, because the terms 
$Bw_k = \sum_i w_i B_{ik}$, $B_{ik} \doteq \lb w_i , B w_k \rb \in \mA$,
typically belong to $\efh$ and induce a mixing in the tensor product.
The point is that in general $Bw_k$ does not belong to $\bsh$, 
thus we must perform the operations of the proof of Lemma \ref{lem.fM.01} to get a tensor of
the form $Bv_- = \sum_i w'_i A'_i$ with $w'_i \in \bsh^n$.
It is after this operation that we can apply the projection $P_-^n$ and get 
$P_-^nBv_- = \sum_i w'_{i,-} A_i$.
%

\medskip

In the sequel, we shall write $\efh_\circ \subseteq \efh$ for the
vector space spanned by (finite linear combinations of) vectors of the type $w\gamma$, $w \in \bsh$, 
$\gamma \in \mG$. By construction $\efh_\circ$ is dense in $\efh$.

\medskip 

We now establish some properties of antisymmetric tensors
in case of a $\mG$-twist. The most important is that, in spite of the previous
remark, the fermionic space is stable both under the $\mG$-action and antisymmetric
tensor products by vectors in $\efh_\circ$.
\begin{lem}[Twists and permutation symmetry]
\label{lem.Gtwist}
Let $\efh = \bsh\mA$ be a free Hilbert bimodule with $\mG$-twist 
$u : \mG \to \mU(\bsh)$. Then the following properties hold:
\begin{enumerate}
    \item For any $\gamma \in \mG \subseteq \mU\mA$ it turns out 
          $\gamma \efh^n_- \in \efh^n_-$, 
          so that $\efF_-(\efh)$ is stable under the left module action by $\mG$.
          %
          %
    \item Let $g = g_i \gamma_i \in \efh_\circ$ with $g_i \in \bsh$, 
          $\gamma_i \in \mG$. Then for any tensor $w_-A \in \efh^n_-$ with 
          $w_- \doteq w_1 \otimes_- \ldots \otimes_- w_n \in \bsh_-^n$, 
          $A \in \mA$, we have 
          \begin{equation}
          \label{eq.lem.Gtwist.1}
          P_-^{n+1} (g \otimes w_-A) \ = \ 
          g_i \otimes_- (u_{\gamma_i}w_1) \otimes_- \ldots \otimes_- (u_{\gamma_i}w_n)
          \cdot \gamma_i A 
          \ \in \efh^n_- \, ,
          \end{equation}
          so $\efF_-(\efh)$ is stable under antisymmetric tensor product 
          by elements of $\efh_\circ$;
    \item For $g = g_i \gamma_i \in \efh_\circ$ it turns out 
          \begin{equation}
          \label{eq.lem.Gtwist.2}
          P_-^{n+1}(g \otimes w_-A) \ = \ 
          \frac{1}{n} \, 
          \left( 
          g \otimes w_-A - \sum_{k=1}^n (-1) ^{k-1} w_k \otimes P_-^n( g \otimes w^{(k)}_- A)
          \right) \, .
          \end{equation}
\end{enumerate}
\end{lem}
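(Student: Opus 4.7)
The three items form an interlocking hierarchy, each bootstrapping from the preceding. The overall strategy is to reduce every statement to simple tensors $w_1 \otimes \cdots \otimes w_n \in \bsh^n$ with an overall scalar in $\mA$, and then exploit the defining twist identity $\gamma w = u_\gamma(w)\gamma$ (for $\gamma \in \mG$, $w \in \bsh$) of Definition \ref{def.twist} together with the bimodule relation (\ref{eq.B.01'}) to commute group elements through tensor positions.

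For item 1, fix $\gamma \in \mG$ and take a simple antisymmetric tensor $w_-A$ with $w_- = w_1 \otimes_- \cdots \otimes_- w_n \in \bsh^n_-$, $A \in \mA$. Iterating the twist relation position by position, with (\ref{eq.B.01'}) transferring the $\gamma$ one slot at a time to the right, produces
\[
\gamma\,(w_1 \otimes \cdots \otimes w_n)\,A \ = \ (u_\gamma w_1) \otimes \cdots \otimes (u_\gamma w_n) \cdot \gamma A .
\]
Because $u_\gamma$ is unitary on $\bsh$, the operator $u_\gamma^{\otimes n}$ commutes with every permutation operator $U_\varrho$ on $\bsh^n$ (permutations act on positions while $u_\gamma$ acts pointwise on the vector space at each position), and therefore with $P_-^n$. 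Consequently $u_\gamma^{\otimes n}(w_-) \in \bsh^n_-$ and $\gamma(w_-A) = u_\gamma^{\otimes n}(w_-)\,\gamma A \in \bsh^n_-\mA = \efh^n_-$, proving stability.

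For item 2, expand $g = g_i\gamma_i$ and apply (\ref{eq.B.01'}) once to transfer the scalar $\gamma_i$ from the right of $g_i$ to the left of $w_-A$:
\[
g \otimes w_-A \ = \ g_i \otimes \gamma_i w_-A \ = \ g_i \otimes u_{\gamma_i}^{\otimes n}(w_-)\,\gamma_i A ,
\]
where the last equality is item 1. The elementary expression inside the tensor product now has all $n+1$ vector factors in $\bsh$ (namely $g_i$ at position $0$ and $u_{\gamma_i}(w_1),\dots,u_{\gamma_i}(w_n)$ at positions $1,\dots,n$), with the scalar $\gamma_i A$ on the outside. Thus $P_-^{n+1}$ reduces to the ordinary antisymmetrization on $\bsh^{n+1}$ with the scalar right-multiplied, giving exactly (\ref{eq.lem.Gtwist.1}).

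For item 3, I apply the recursive decomposition (\ref{eq.A0.1.PS1}) to the antisymmetric $(n+1)$-tensor on the right-hand side of (\ref{eq.lem.Gtwist.1}), singling out each of the $n+1$ positions in turn. The zeroth contribution reassembles back to $g \otimes w_-A$ by reversing the twist manipulations of item 2. Each $k$-th contribution ($k = 1,\dots,n$) is recast into the form $w_k \otimes P_-^n(g \otimes w^{(k)}_-A)$ by invoking item 2 one degree lower (with $w^{(k)}_-$ in place of $w_-$), together with repeated applications of (\ref{eq.B.01'}) and the twist to relocate $\gamma_i$ back inside the inner antisymmetrization. The main obstacle lies precisely in this last step: one must correctly shuffle the summation index $i$ arising from the decomposition of $g$ across the different $k$-terms, and balance the alternating signs against the combinatorial normalization linking $P_-^{n+1}$ on the $(n+1)$-tensor to the inner $P_-^n$ on the $n$-tensors. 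The algebraic reshuffling is routine in principle but unavoidably tedious to carry out in detail.
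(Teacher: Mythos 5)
Points 1 and 2 of your proposal are correct and follow the paper's own argument essentially verbatim: iterate the twist relation $\gamma w=(u_\gamma w)\gamma$ slot by slot via (\ref{eq.B.01'}), note that $\otimes^n u_\gamma$ commutes with the permutation operators (so $P_-^n$ is preserved), and for point 2 use right $\mA$-linearity of $P_-^{n+1}$ to reduce the antisymmetrization to the ordinary one on $\bsh^{n+1}$ with the scalar $\gamma_i A$ carried on the right. Nothing to object to there.

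Point 3, however, is a proof plan rather than a proof, and the step you defer as ``routine in principle but unavoidably tedious'' is precisely the entire content of the claim. The strategy you describe is the paper's: expand the antisymmetric $(n+1)$-tensor $g_i\otimes_-(u_{\gamma_i}w_1)\otimes_-\cdots\otimes_-(u_{\gamma_i}w_n)$ from (\ref{eq.lem.Gtwist.1}) by the single-slot decomposition (\ref{eq.A0.1.PS1}) and undo the twist term by term, recognizing $P_-^n(g\otimes w^{(k)}_-A)$ through point 2 one degree lower. But when you actually perform that expansion, the leading factor of the $k$-th summand is $u_{\gamma_i}w_k$, not $w_k$, and it is locked to the summation index $i$ of $g=g_i\gamma_i$; to arrive at (\ref{eq.lem.Gtwist.2}) you must show how $\sum_i(u_{\gamma_i}w_k)\otimes\bigl(g_i\otimes_-(u_{\gamma_i}w^{(k)})_-\bigr)\gamma_iA$ reassembles into the $i$-independent leading factor $w_k$ tensored with $P_-^n(g\otimes w^{(k)}_-A)$. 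You name this obstacle explicitly and then stop; naming it is not overcoming it. The same applies to the normalization: the $(n+1)$-slot decomposition naturally produces a prefactor $1/(n+1)$, whereas (\ref{eq.lem.Gtwist.2}) carries $1/n$, and your sketch does not say how the count comes out. (For comparison, the paper's proof disposes of the reabsorption in one line by rewriting $g_i\otimes u_{\gamma_i}(\cdot)\,\gamma_i=g_i\gamma_i\otimes(\cdot)$ inside each summand; whatever route you take, you must make the analogous manipulation explicit for the leading $u_{\gamma_i}w_k$ factor, which is exactly where your write-up is silent.) Until these two points are settled, item 3 is unproven.
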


\begin{proof}
\textbf{Point 1}. Let $w_-A \in \efh^n_-$ with $w = w_1 \otimes_- \ldots \otimes_- w_n$ an elementary antisymmetric tensor. 
Then for any permutation $\varrho$ it turns out
\[
\begin{array}{lcl}
\gamma w_{\varrho(1)} \otimes \ldots \otimes w_{\varrho(n)}
& = &
%
%
(u_\gamma w_{\varrho(1)}) \otimes  \ldots \otimes (u_\gamma w_{\varrho(n)}) \gamma \, .
\end{array}
\]
Thus, defining $u_\gamma^- \doteq P_-^n \cdot \otimes^n u_\gamma \in \mU(\bsh_-^n)$
we find that $\gamma w_- A = (u_\gamma^- w_-) \gamma A$ belongs to $\efh^n_-$
as claimed.
\textbf{Point 2}. With the notation of the previous point, with $g = g_i\gamma_i$ we have
\[
\begin{array}{lcl}
g \otimes w_-A
& = &
\sum_\varrho \eps_\varrho \, g_i \otimes \gamma_i
             w_{\varrho(1)} \otimes \ldots \otimes w_{\varrho(n)} A
\, = \\ & = &
\sum_\varrho \eps_\varrho \, g_i \otimes
             (u_{\gamma_i}w_{\varrho(1)}) \otimes \ldots \otimes (u_{\gamma_i}w_{\varrho(n)})
             \gamma_i A
\, = \\ & = &
g_i \otimes (u_{\gamma_i}w_1) \otimes_- \ldots \otimes_- (u_{\gamma_i}w_n)
          \cdot \gamma_i A 
\, .
\end{array}
\]
Thus, using the fact that $P_-^n$ is a right $\mA$-linear operator, we conclude that
\[
\begin{array}{lcl}
P_-^n(g \otimes w_-A) 
& = & 
P_-^n(g_i \otimes ( (u_{\gamma_i}w_1) \otimes_- \ldots \otimes_- (u_{\gamma_i}w_n) ) 
          \, \gamma_iA 
\, = \\ & = &
g_i \otimes_- (u_{\gamma_i}w_1) \otimes_- \ldots \otimes_- (u_{\gamma_i}w_n)
          \, \gamma_i A \, .
\end{array}
\]
\textbf{Point 3}. We have, by applying (\ref{eq.A0.1.PS1}) and (\ref{eq.lem.Gtwist.1}),
\[
\begin{array}{lcl}
P_-^{n+1}(g \otimes w_-A) 
& = & 
(g_i \otimes_- u_{\gamma_i}^- w_-) \gamma_i A
\, = \\ & = &
1/n \left( 
g_i \otimes u_{\gamma_i}^- w_- - 
\sum_{k=1}^n (-1)^{k-1} \, w_k \otimes ( g_i \otimes_- u_{\gamma_i}^- w^{(k)}_-)
\right) \gamma_i A
\, = \\ & = &
1/n \left( 
g_i\gamma_i \otimes w_- - 
\sum_{k=1}^n (-1)^{k-1} \, w_k \otimes ( g_i\gamma_i \otimes_- w^{(k)}_-)
\right) A
\, = \\ & = &
1/n \left( 
g \otimes w_- - 
\sum_{k=1}^n (-1)^{k-1} \, w_k \otimes ( g \otimes_- w^{(k)}_-)
\right) A \, .
\end{array}
\]
\end{proof}

Now, by (\ref{eq.A0.1.PS1}), given $f \in \efh$ we have
\begin{equation}
\label{eq.bFBs.02}
\lb f | v_- \ = \ 
\frac{1}{n}
\sum_k (\pm 1)^{k-1} 
\lb f,w_k \rb \, w^{(k)}_- A
\, \in \efh^{n-1}
\, ,
\end{equation}
thus in general we cannot say that $\lb f | v_- \in \efh^{n-1}_-$. Yet we have the following property:
\begin{lem}
\label{lem.Gtwist2}
Assume that there is a $\mG$-twist on $\efh$ and let $f \in \efh_\circ$.
Then $\lb f | v_- \in \efh^{n-1}_-$ for all $v_- = w_- A \in \efh^n_-$,
and $\efF_-(\efh)$ is stable under the action of the operator $\lb f |$.
\end{lem}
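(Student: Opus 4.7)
The plan is to reduce the claim to Point 1 of Lemma \ref{lem.Gtwist} via a direct computation using formula (\ref{eq.bFBs.02}), once $f$ is expanded along its defining decomposition.

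First, I would write $f = \sum_i f_i \gamma_i \in \efh_\circ$ with $f_i \in \bsh$ and $\gamma_i \in \mG$, a finite sum by definition of $\efh_\circ$. Then, using the identity $\lb gB, h \rb = B^* \lb g,h \rb$ that follows from (\ref{eq.0.00}), I compute
\[
\lb f , w_k \rb \ = \ \sum_i \lb f_i \gamma_i , w_k \rb \ = \ \sum_i \gamma_i^* \lb f_i , w_k \rb \, ,
\]
and I crucially observe that $\lb f_i , w_k \rb \in \bC{\bf 1}$ since $f_i , w_k \in \bsh$. Plugging into (\ref{eq.bFBs.02}) yields
\[
\lb f | v_- \ = \
\frac{1}{n} \sum_{k,i} (-1)^{k-1} \, \lb f_i , w_k \rb \cdot \gamma_i^* \bigl( w^{(k)}_- A \bigr) \, ,
\]
where the scalars $\lb f_i , w_k \rb$ can now be freely commuted past $\gamma_i^*$ and past the bimodule operations.

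Next, each factor $w^{(k)}_- A$ belongs to $\efh^{n-1}_-$ by construction, and since $\gamma_i^* \in \mG$, Point 1 of Lemma \ref{lem.Gtwist} guarantees that $\gamma_i^* \bigl( w^{(k)}_- A \bigr) \in \efh^{n-1}_-$. Each summand in the displayed expression therefore lies in $\efh^{n-1}_-$, and so does their finite linear combination, proving the first claim.

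For the stability of $\efF_-(\efh)$ under $\lb f |$, I would invoke two general facts: $\lb f |$ is bounded and right $\mA$-linear on each $\efh^n$ (as recalled just before (\ref{eq.0.11})), and the antisymmetric elementary tensors $w_- A$ with $w_- \in \bsh^n_-$, $A \in \mA$, span a norm-dense right $\mA$-submodule of $\efh^n_-$ by Lemma \ref{lem.fM.01}. Combining these with closedness of $\efh^{n-1}_-$ extends the inclusion $\lb f | (\efh^n_-) \subseteq \efh^{n-1}_-$ to all of $\efh^n_-$, and summing over $n$ (on the common domain $\efF^\#(\efh)$) gives the asserted stability. The only mildly delicate step is the initial rewriting of $\lb f , w_k \rb$: the twist hypothesis enters precisely here, ensuring that the non-trivial left action by $\gamma_i^*$ preserves antisymmetry, which would generically fail for arbitrary $f \in \efh$.
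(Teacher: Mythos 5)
Your proposal is correct and follows essentially the same route as the paper: expand $f = f_i\gamma_i$, use $\lb f_i , w_k \rb \in \bC$ to pull the scalar out, and then use the $\mG$-twist (the paper writes out $\gamma_i^* w^{(k)}_- = (u_{\gamma_i}^*w_{\cdot}) \otimes_- \ldots \, \gamma_i^*$ explicitly where you cite Point 1 of Lemma \ref{lem.Gtwist}, which is the same fact). Your added boundedness-plus-density remark to pass from elementary tensors to all of $\efh^n_-$ is a harmless completion the paper leaves implicit.
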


\begin{proof}
Starting from (\ref{eq.bFBs.02}) and writing $f = f_i \gamma_i$, $f_i \in \bsh$,
$\gamma_i \in \mG$, we find
\[
\begin{array}{lcl}
\lb f | v_-
& = & 
1/n
\sum_k (\pm 1)^{k-1} 
\lb f_i ,w_k \rb \, \gamma_i^* w^{(k)}_- A
\, = \\ & = &
1/n
\sum_k (\pm 1)^{k-1} 
\lb f_i , w_k \rb \, (u_{\gamma_i}^* w_1) \otimes_- \ldots \otimes_- (u_{\gamma_i}^* w_n) \, \gamma_i^* A
\, .
\end{array}
\]
Since $\lb f_i , w_k \rb \in \bC$, the last term belongs to $\efh^{n-1}_-$ as claimed. 
\end{proof}

The next computations will allow to evaluate the anti-commutation relations.
By (\ref{eq.lem.Gtwist.2}), for $f \in \efh$ and $g = g_i \gamma_i \in \efh_\circ$ we get
\begin{equation}
\label{eq.A0.1.PS1a''}
\lb f| P_-^{n+1}(g \otimes w_-A)
\ = \ 
\frac{1}{n} \, 
\left( 
\lb f,g \rb w_-A - \sum_{k=1}^n (-1) ^{k-1} \lb f,w_k \rb P_-^n( g \otimes w^{(k)}_-)A
\right)
\end{equation}
and
\begin{equation}
\label{eq.A0.1.PS1a'''}
P_-^{n+1}(g \otimes \lb f | w_-A)
\ = \ 
\frac{1}{n} \sum_k (-1)^{k-1}  P_-^n( g \otimes \lb f,w_k \rb \, w^{(k)}_- )A
\, .
\end{equation}
About the last equality, we note that due to the antisymmetrization operator $P_-^n$ 
the factor $\lb f,w_k \rb \in \mA$ appears in $i$-th position of the involved elementary tensor 
for any $i=1,\ldots,n$. 
This implies that 
(\ref{eq.A0.1.PS1a'''}) may differ from the sum at r.h.s. of (\ref{eq.A0.1.PS1a''}),
because in general $\lb f,w_k \rb$ cannot freely shift on the left of the 
involved elementary tensor.
%
%

\medskip

Let now $P_- \doteq \oplus_n P_-^n$, where $P_-^0$ and $P_-^1$ are the identity.
We introduce the notation $\efF_-^\#(\efh) \doteq \efF_-(\efh) \cap \efF^\#(\efh)$,
and for any $f \in \efh_\circ$ define the fermionic annihilation and creation operators

\medskip

\begin{equation}
\label{eq.fFs.01}
\left\{
\begin{array}{l}
{\bs a}_-(f) : \efF_-^\#(\efh) \to \efF_-^\#(\efh) 
\ \ \ , \ \ \ 
{\bs a}_-(f) \, \doteq \, a(f) \restriction \efF_-^\#(\efh)
\, ,
\\ \\
{\bs a}_-^*(f) : \efF_-^\#(\efh) \to \efF_-^\#(\efh)
\ \ , \ \ 
{\bs a}_-^*(f) \, \doteq \, P_- a^*(f) \restriction \efF_-^\#(\efh) \, .
\end{array}
\right.
\end{equation}

\medskip

%
%
%
\noindent Note that the property
\begin{equation}
\label{eq.fFs.01'}
{\bs a}_-(f)  \efF_-^\#(\efh) \, \subseteq \, \efF_-^\#(\efh)
\ \ , \ \ 
{\bs a}_-(f) \, = \, P_- {\bs a}_-(f)
\ \ , \ \ 
\forall f \in \efh_\circ
\, ,
\end{equation}
tacitly understood in (\ref{eq.fFs.01})
is a consequence of the hypothesis that there is a $\mG$-twist
and Lemma \ref{lem.Gtwist2}.
Also note that for the moment we do not know whether the fermionic creation and annihilation operators
are bounded, thus in (\ref{eq.fFs.01}) we make use of the domain $\efF_-^\#(\efh)$.
In the following result we give an interpretation of the twist $u : \mG \to \mU(\bsh)$
as an obstacle to make the creation and annihilation operators commute with elements of $\mA$.
\begin{lem}
\label{lem.Gtwist4}
Let $v \in \bsh$ and $\gamma \in \mG$.
Then 
$\gamma {\bs a}_-^*(v) = {\bs a}_-^*(u_\gamma v) \gamma$
and
$\gamma {\bs a}_-(u_\gamma^*v) = {\bs a}_-(v) \gamma$.
\end{lem}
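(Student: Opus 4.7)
The plan is to verify both identities by direct computation on elementary antisymmetric tensors $w_-A \in \efh_-^n$, where $w_- \doteq w_1 \otimes_- \cdots \otimes_- w_n \in \bsh_-^n$ and $A \in \mA$. The inputs are the twist relation (\ref{eq.def.twist}), Lemma~\ref{lem.Gtwist}(1) (which asserts $\gamma(w_-A) = (u_\gamma^- w_-)\,\gamma A$ with $u_\gamma^- \doteq P_-^n \circ u_\gamma^{\otimes n}\in\mU(\bsh_-^n)$), the definitions (\ref{eq.A0.1.3}), (\ref{eq.bFBs.02}) and (\ref{eq.fFs.01}), right $\mA$-linearity of $P_-^n$ and of $\lb f |$, and the fact that $\lb v,w \rb \in \bC$ for $v,w \in \bsh$.

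For the creation identity, since $v \in \bsh$ and $w_- \in \bsh_-^n$, the vector $v\otimes_- w_- \doteq P_-^{n+1}(v\otimes w_-)$ lies in $\bsh_-^{n+1}$. Right $\mA$-linearity of $P_-^{n+1}$ gives
\[
\gamma\,\bs a_-^*(v)(w_-A) \;=\; \sqrt{n+1}\,\gamma\bigl((v\otimes_- w_-)\,A\bigr),
\]
and Lemma~\ref{lem.Gtwist}(1) applied to $(v\otimes_- w_-)A\in\efh^{n+1}_-$ turns this into $\sqrt{n+1}\,\bigl((u_\gamma v)\otimes_- u_\gamma^- w_-\bigr)\,\gamma A$ (using that $u_\gamma^-$ intertwines the $(n{+}1)$-fold with the $n$-fold antisymmetrized tensor powers of $u_\gamma$). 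Conversely, using Lemma~\ref{lem.Gtwist}(1) to compute $\gamma(w_-A) = (u_\gamma^- w_-)\,\gamma A$ and then applying $\bs a_-^*(u_\gamma v)$ with $u_\gamma v\in\bsh$ yields the same expression.

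For the annihilation identity, I would apply (\ref{eq.bFBs.02}) to $f = u_\gamma^*v\in\bsh$ and use unitarity of $u_\gamma$ on $\bsh$ to rewrite $\lb u_\gamma^*v,w_k\rb = \lb v,u_\gamma w_k\rb \in \bC$, obtaining
\[
\bs a_-(u_\gamma^*v)(w_-A) \;=\; \frac{1}{\sqrt n}\sum_{k=1}^n (-1)^{k-1}\lb v,u_\gamma w_k\rb\, w_-^{(k)}A.
\]
Each scalar commutes with $\gamma$, and applying $\gamma$ to each $w_-^{(k)}A\in\efh^{n-1}_-$ via Lemma~\ref{lem.Gtwist}(1) produces $u_\gamma^-(w_-^{(k)})\,\gamma A$. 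The bookkeeping identity $u_\gamma^-(w_-^{(k)}) = (u_\gamma^- w_-)^{(k)}$, immediate from the factorized form of $u_\gamma^-$, then lets me recognize the sum as $\bs a_-(v)(u_\gamma^- w_-)\,\gamma A$ by a second application of (\ref{eq.bFBs.02}); this equals $\bs a_-(v)\,\gamma(w_-A)$ by right $\mA$-linearity of $\bs a_-(v)$ together with Lemma~\ref{lem.Gtwist}(1).

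There is no conceptual obstacle; the proof is essentially notational bookkeeping, tracking at each step which arguments live in $\bsh$ versus $\efh$, and invoking the twist relation at the correct moments. One could alternatively derive the annihilation identity from the creation identity by a formal adjoint argument, substituting $v\mapsto u_\gamma^*v$ and using $\gamma^* = \gamma^{-1}$, but the direct route above avoids any worry about domains of unbounded operators.
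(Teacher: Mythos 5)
Your proposal is correct and follows essentially the same route as the paper: a direct computation on elementary antisymmetric tensors $w_-A$, applying the action $\gamma(x_-B) = (u_\gamma^- x_-)\gamma B$ from Lemma~\ref{lem.Gtwist}(1) and matching both sides. The paper dispatches the annihilation identity with "an analogous argument," so your explicit verification via (\ref{eq.bFBs.02}), unitarity of $u_\gamma$, and the bookkeeping identity $u_\gamma^-(w_-^{(k)}) = (u_\gamma^- w_-)^{(k)}$ simply fills in detail the paper omits.
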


\begin{proof}
Let $w_-A \in \efh_-^n$, $w_- \in \bsh_-^n$, $A \in \mA$. 
By the argument of the proof of Lemma \ref{lem.Gtwist}, Point 1, we find
\[
\begin{array}{lcl}
\gamma {\bs a}_-^*(v) w_-A & = & 
\gamma P_-(v \otimes w_-) A \ = \ 
( u_\gamma v  \otimes_-  u_\gamma^- w_- ) \gamma A \ = \\ & = &  
{\bs a}_-^*(u_\gamma v) (u_\gamma^- w_-) \gamma A \ = \ 
{\bs a}_-^*(u_\gamma v) \gamma w_- A \, .
\end{array}
\]
With an analogous argument the claim about ${\bs a}_-(v)$ is proved.
\end{proof}

\begin{lem}
\label{lem.Gtwist3}
Let $f \in \efh_\circ$.
Then ${\bs a}_-^*(f)$ is the adjoint of ${\bs a}_-(f)$ over the domain $\efF_-^\#(\efh)$.
\end{lem}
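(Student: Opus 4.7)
The plan is to verify the formal adjoint identity
\[
\lb v \, , \, {\bs a}_-^*(f) w \rb \ = \ \lb {\bs a}_-(f) v \, , \, w \rb
\ \ \ , \ \ \
v , w \in \efF_-^\#(\efh) \, ,
\]
(valued in $\mA$), by reducing it to the adjoint relation (\ref{eq.CrAn'}) already established for the unrestricted creation and annihilation operators on $\efF^\#(\efh)$.

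The crucial preliminary step is the self-adjointness of the antisymmetrizer $P_-$ as an element of $\efB(\efF(\efh))$. Each permutation operator $U_\varrho$ of (\ref{eq.PS.01}), being unitary in the right Hilbert module sense, satisfies $U_\varrho^* = U_{\varrho^{-1}}$; combined with the obvious identity $\eps_{\varrho^{-1}} = \eps_\varrho$, this yields $(P_-^n)^* = P_-^n$ on each tensor power and hence $P_-^* = P_-$.

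Granted this, the main computation is the chain
\begin{align*}
\lb v \, , \, {\bs a}_-^*(f) w \rb
& = \ \lb v \, , \, P_- a^*(f) w \rb \\
& = \ \lb P_- v \, , \, a^*(f) w \rb \\
& = \ \lb v \, , \, a^*(f) w \rb \\
& = \ \lb a(f) v \, , \, w \rb \\
& = \ \lb {\bs a}_-(f) v \, , \, w \rb \, ,
\end{align*}
where the first and last equalities are the definitions (\ref{eq.fFs.01}), the second uses $P_-^* = P_-$, the third uses $P_- v = v$ since $v \in \efF_-(\efh)$, and the fourth is (\ref{eq.CrAn'}) applied to $v, w \in \efF^\#(\efh)$.

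The hypothesis $f \in \efh_\circ$ does not enter the chain of equalities directly; rather, it is needed upstream, in order for ${\bs a}_-(f)$ and ${\bs a}_-^*(f)$ to be well-defined maps from $\efF_-^\#(\efh)$ to itself (via Lemma \ref{lem.Gtwist2} and the stability property (\ref{eq.fFs.01'})). I do not expect any serious obstacle: the self-adjointness of $P_-$ is essentially immediate, and the remaining steps are algebraic. The only point requiring attention is degree bookkeeping, so that on each side of each equality the paired vectors live in compatible tensor powers of $\efh$.
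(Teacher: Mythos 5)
Your proof is correct and is essentially the paper's own argument run in the opposite direction: the paper computes $\lb v , {\bs a}_-(f) v' \rb = \lb v , a(f) P_- v' \rb = \lb P_- a^*(f) v , v' \rb = \lb {\bs a}_-^*(f) v , v' \rb$, using exactly the same ingredients (the definitions (\ref{eq.fFs.01}), $P_-v=v$ on $\efF_-^\#(\efh)$, self-adjointness of $P_-$, and (\ref{eq.CrAn'})). Your explicit verification that $P_-^*=P_-$ via $U_\varrho^* = U_{\varrho^{-1}}$ and $\eps_{\varrho^{-1}}=\eps_\varrho$ is a detail the paper leaves tacit (it simply calls $P_-$ a spectral projection), and your remark on where $f\in\efh_\circ$ enters matches the paper's appeal to (\ref{eq.fFs.01'}).
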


\begin{proof}
By applying (\ref{eq.fFs.01}) and (\ref{eq.fFs.01'}) we find
\[
\lb v , {\bs a}_-(f) v' \rb \ = \ 
\lb v , a(f) P_- v' \rb \ = \ 
\lb P_- a^*(f) v , v' \rb \ = \ 
\lb {\bs a}_-^*(f) v , v' \rb \, ,
\]
for all $v,v' \in \efF_-^\#(\efh)$. This proves the Lemma.
\end{proof}

\medskip

Let $\omega \in \mS(\mA)$. We define $\efF_-^\omega(\efh)$ as the Hilbert space
obtained by evaluation of $\efF_-(\efh)$ over $\omega$.
In the previous lines we proved that 
${\bs a}_-(f)$ and ${\bs a}_-^*(f)$ are well-defined, $\mA$-linear
and one the adjoint of the other in the sense of Hilbert modules (on $\efF_-^\#(\efh)$);
thus, by the argument used to construct $a^\omega(f)$ and $a^{*,\omega}(f)$, we define
\[
{\bs a}_-^\omega(f) \ \ \ , \ \ \ {\bs a}_-^{*,\omega}(f) \ \ \ , \ \ \ f \in \efh_\circ \, ,
\]
as the evaluations of ${\bs a}_-(f)$ and ${\bs a}_-^*(f)$ over $\omega$.
The domain of these operators is clearly given by $\efF_-^{\#,\omega}(\efh)$,
defined by evaluation of vectors in $\efF_-^\#(\efh)$.

\section{CARs and fermionic fields}
\label{sec.CAR}

In the present section we study the anti-commutation relations that our fermionic
creation and annihilation operators fulfill, and define the corresponding Dirac field
assuming the presence of a suitable conjugation.
We bring on the light some features 
that in general prevent anticommutators to be expressed only in 
terms of the given $\mA$-valued scalar product and, primarily, make anticommutators
non-local in the sense that they do not vanish even when the involved "spinors" are orthogonal.
In fact, not surprisingly, to get anticommutators of the usual form
besides orthogonality we must require 
mutual freeness Rem.\ref{rem.Pauli}.

\ 

We proceed by maintaining the assumptions of the previous section, 
so that $\efh = \bsh\mA$ is free and there is a $\mG$-twist 
$u : \mG \to \mU(\bsh)$, $\mG \subseteq \mU\mA$.
Before computing our anti-commutation relations, 
for convenience we give a notion of mutual freeness explicitly designed to handle 
vectors in $\efh_\circ$.
Let $f = f_i \gamma_i$ and $g = g_h \gamma_h \in \efh_\circ$,
with $f_i , g_h \in \bsh$, $\gamma_i , \gamma'_h \in \mG$.
We write $f \bowtie_\circ g$ whenever for all $i,h$ it turns out
\begin{equation} 
\label{eq.split}
[ \gamma_i \, , \, \gamma'_h ] \, = \, 0
\ \ , \ \ 
u_{\gamma_i} g_h \, = \, g_h
\ \ , \ \ 
u_{\gamma'_h} f_i \, = \, f_i
\, .
\end{equation}
\begin{lem}
\label{lem.conj''}
If $f , g \in \efh_\circ$ and $f \bowtie_\circ g$, then $f \bowtie g$.
\end{lem}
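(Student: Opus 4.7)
The plan is to deduce each of the three conditions defining $f \bowtie g$ --- namely
$fB' = B'f$, $gB = Bg$ and $[B,B'] = 0$ for $B \in \mA(f)$, $B' \in \mA(g)$ --- from
the elementary commutation data in $f \bowtie_\circ g$ by a standard C*-algebraic closure
argument. Writing $f = f_i \gamma_i$ and $g = g_h \gamma'_h$ as finite sums,
one has $\lb v, f \rb = \sum_i \lb v, f_i \rb \gamma_i \in \mathrm{span}_{\bC}\{\gamma_i\}$
for each $v \in \bsh$, and similarly for $g$. This yields the preliminary inclusions
\[
\mA(f) \, \subseteq \, C^*\{\gamma_i : i\}
\ \ , \ \
\mA(g) \, \subseteq \, C^*\{\gamma'_h : h\} \, ,
\]
so it suffices to verify each condition on the generators $\gamma_i^{\pm 1}$,
$(\gamma'_h)^{\pm 1}$ and then invoke standard closure properties.

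The main computation is that, by the twist relation (Def.~\ref{def.twist}) together with the
hypotheses $u_{\gamma'_h}f_i = f_i$ and $[\gamma_i, \gamma'_h] = 0$, one finds
\[
\lambda(\gamma'_h) f \, = \, \lambda(\gamma'_h)(f_i\gamma_i) \, = \,
(u_{\gamma'_h}f_i)\,\gamma'_h\gamma_i \, = \, f_i\gamma_i\gamma'_h \, = \, f\gamma'_h \, .
\]
Since $u$ is a group morphism, the same identity holds with $(\gamma'_h)^{-1}$ in place of
$\gamma'_h$, so the set $S \doteq \{B' \in \mA : \lambda(B')f = fB'\}$ contains $\gamma'_h$
and $(\gamma'_h)^{-1} = (\gamma'_h)^*$ for every $h$. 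Now $S$ is closed under sums and
scalar multiples, under products (since $\lambda$ is a $*$-morphism and each $\lambda(B'_1)$ is
right $\mA$-linear), and under norm limits (continuity of $\lambda$ and of right
multiplication); hence $S$ contains the C*-subalgebra $C^*\{\gamma'_h\}$, and in particular
$\mA(g) \subseteq S$. This is the first condition; the second follows by the symmetric
argument with the roles of $(f,\gamma_i)$ and $(g,\gamma'_h)$ exchanged.

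For the third condition, the hypothesis $[\gamma_i, \gamma'_h] = 0$ passes to the inverses
$\gamma_i^{-1}$ and $(\gamma'_h)^{-1}$, hence to the $*$-algebras they generate and, using
that the relative commutant of any subset of a C*-algebra is itself a C*-subalgebra,
to the norm closures $C^*\{\gamma_i\}$ and $C^*\{\gamma'_h\}$; combined with the preliminary
inclusions this gives $[\mA(f), \mA(g)] = 0$ and completes the proof. I do not foresee a
serious obstacle; the one point requiring care is the systematic use of unitarity of the
$\gamma$'s to turn adjoints into group elements, which is what lets the elementary data from
$f \bowtie_\circ g$ propagate to the full C*-closures without a separate argument for
$*$-closure of $S$.
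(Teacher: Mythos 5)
Your proof is correct and follows essentially the same route as the paper's: the inclusion $\mA(f)\subseteq C^*\{\gamma_i\}$ from the expansion $\lb v,f\rb=\sum_i\lb v,f_i\rb\gamma_i$, the generator-level identity $\lambda(\gamma'_h)f=f\gamma'_h$ via the twist relation, and passage to the full supports. The only difference is that you spell out the C*-closure arguments (the set $S$ being a norm-closed $*$-subalgebra, commutants being C*-subalgebras) that the paper leaves implicit in ``This implies $f\bowtie g$.''
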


\begin{proof}
As a preliminary step we note that any generator $\lb w , f \rb$ of $\mA(f)$, 
defined for $w \in \bsh$, is a linear combination in $\{ \gamma_i \}$,
so $\mA(f)$ is contained in the C*-algebra generated by $\{ \gamma_i \}$.
Thus the hypothesis $f \bowtie_\circ g$ and (\ref{eq.split}) imply
$[ A,B ] = 0$ for all $A \in \mA(f)$, $B \in \mA(g)$;
moreover,
$\gamma_i g  =  (u_{\gamma_i} g_h) \gamma_i \gamma'_h =  (u_{\gamma_i} g_h) \gamma'_h \gamma_i  =  g \gamma_i$,
and analogously for $f$ and $\gamma'_h$. This implies $f \bowtie g$,
as claimed.
\end{proof}

\paragraph{Anticommutators of creation operators.}
Let $f \bowtie_\circ g \in \efh_\circ$ and 
$v_- = w_- A \in \efh_-^n$, $w_- \in \bsh_-^n$, $A \in \mA$.
Writing $f = f_i \gamma_i$, $g = g_h \gamma'_h$  we find
\[
\begin{array}{lcl}
{\bs a}_-^*(f) {\bs a}_-^*(g) w_-A & = & 
P_-^{n+2} ( f \otimes P_-^{n+1}( g \otimes w_- ) ) A \ = \\ & = & 
P_-^{n+2} ( f_i \otimes \gamma_i P_-^{n+1} ( g_h \otimes u'_h w_- ) ) \gamma'_h A \ = \\ & = & 
P_-^{n+2} ( f_i \otimes P_-^{n+1} ( g_h \otimes u_i u'_h w_- ) ) \gamma_i \gamma'_h A \ = \\ & = & - P_-^{n+2} ( g_h \otimes P_-^{n+1} ( f_i \otimes u_i u'_h w_- ) ) \gamma_i \gamma'_h A \ = \\ & = & 
- P_-^{n+2} ( g_h \otimes P_-^{n+1} ( f_i \otimes \gamma_i \gamma'_h w_- ) ) A \ = \\ & = & 
- P_-^{n+2} ( g_h \otimes P_-^{n+1} ( f \otimes \gamma'_h w_- ) ) A \ = \\ & = & 
- P_-^{n+2} ( g \otimes P_-^{n+1}( f \otimes w_- ) ) A \ = \\ & = & 
- {\bs a}_-^*(g) {\bs a}_-^*(f) w_-A \, .
\end{array}
\]
In the previous computation, for any $n \in \bN$ we defined the antisymmetric tensor powers
$u_i \doteq u(\gamma_i)^-$, $u'_h \doteq u(\gamma'_h)^- \in \mU(\bsh_-^n)$;
moreover, we used (\ref{eq.lem.Gtwist.1}) and the fact that expressions of the type 
$P_-^{n+2} ( f_i \otimes P_-^{n+1} (g_h \otimes u_i u'_h w_- ) )$
are ordinary antisymmetric tensor powers on Hilbert space, so the Pauli principle applies
and the minus sign appears when we exchange $f_i$ and $g_h$.
We conclude that

\begin{equation}
\label{eq.20a}
[ {\bs a}_-^*(f) \, , \, {\bs a}_-^*(g) ]_+ \, = \, 0
\ \ , \ \ 
f \bowtie_\circ g \in \efh_\circ \, .
\end{equation}

\noindent 
Note that the need to assume $f \bowtie_\circ g$ to make the above anticommutators vanish
is an aspect of non-validity of the Pauli principle in full generality, Remark \ref{rem.Pauli}.
%
%

\paragraph{Anticommutators of annihilation operators.}
It should not be a surprise that passing to the adjoint of (\ref{eq.20a})
the anticommutators of annihilation operators vanish. Anyway it is instructive to perform
the explicit computations, both to understand the interplay of 
elements of $\mG$ with the relation $\bowtie_\circ$
and to keep in evidence details that may be useful to approach (unbounded) 
bosonic annihilation operators.

\medskip 

We maintain the hypothesis 
$f \bowtie_\circ g \in \efh_\circ$ and the notation for the orthonormal decompositions
$f=f_i \gamma_i$, $g=g_h\gamma'_h$.
As a first step, we further apply (\ref{eq.A0.1.PS1}) and get
\begin{equation}
\label{eq.vhk}
w_- \, = \, 
\frac{1}{n(n-1)} 
\left(
\sum_{k < h} (-1)^{k+h-1} w_k \otimes w_h \otimes w^{(h,k)}
+
\sum_{k > h} (-1)^{k+h} w_h \otimes w_k \otimes w^{(h,k)} 
\right)
\end{equation}
\begin{equation}
\label{eq.vhk'}
= \, 
\frac{2}{n(n-1)} 
\sum_{k < h} (-1)^{k+h-1} (w_k \otimes_- w_h) \otimes w^{(h,k)} \, ,
\ \ \ \ \ \ \ \ \ \ \ \ \ \ \ \ \ \ \ \ \ \ \ \ \ \ \ \ \ \ \ \ \ \, \, \, \, 
\end{equation}
where
\begin{equation}
\label{eq.vhkdef}
w^{(h,k)} \, \doteq \, 
\left\{
\begin{array}{l}
\ldots \hat{w}_k \otimes_- \ldots \otimes_- \hat{w}_h  \ldots \ \ , \ \ k<h \, , \\ 
\ldots \hat{w}_h \otimes_- \ldots \otimes_- \hat{w}_k  \ldots \ \ , \ \ k>h \, ;
\end{array}
\right.
\end{equation}
in (\ref{eq.vhk}) we used the fact that for $k < h$ the term $w_h$
needs a cyclic permutation with order $h-2$ to shift on the left of the tensor 
$w_1 \otimes \ldots \hat{w}_k \ldots \otimes w_n$, whilst for $k > h$ the order is $h-1$.
Using elementary properties of symmetric tensors, we find
%
%
$w^{(h,k)} = - w^{(k,h)}$, $h \neq k$.
%
%
After these preparations, we get
\begin{align*}
\lb g| \lb f| v_- & = \, 
\frac{2}{n(n-1)} \sum_{k < h} (-1)^{k+h-1}
\lb f \otimes g \, , \,  w_k \otimes_- w_h \rb
\, w^{(h,k)} A \\ & = \,
\frac{2}{n(n-1)} \sum_{k < h} (-1)^{k+h-1}
\lb f \otimes g \, , \,  w_k \otimes_- w_h \rb
\, w^{(h,k)} A \, .
\end{align*}
We analyze in details the terms
\begin{equation}
\label{eq.CAR.4a}
\begin{array}{l}
\lb f \otimes g \, , \, w_h \otimes_- w_k \rb \ = \\
%
%
\lb g_h \gamma'_h \, , \, \lb f_i \gamma_i , w_h \rb \, w_k \rb - 
\lb g_h \gamma'_h \, , \, \lb f_i \gamma_i , w_k \rb \, w_h \rb \, = \\
%
%
{\gamma'_h}^* \lb g_h \gamma_i  \, , \, \lb f_i , w_h \rb \, w_k \rb - 
{\gamma'_h}^* \lb g_h \gamma_i  \, , \, \lb f_i , w_k \rb \, w_h \rb \, = \\
{\gamma'_h}^* \gamma_i^*
( \lb f_i , w_h \rb  \lb g_h , w_k \rb - 
  \lb f_i , w_k \rb  \lb g_h , w_h \rb ) \, = \\
{\gamma'_h}^* \gamma_i^*
( \lb f_i \, , \, \lb g_h , w_k \rb w_h \rb  - 
  \lb f_i \, , \, \lb g_h , w_h \rb w_k \rb   ) \, = \\
\lb f_i \gamma_i \gamma'_h \, , \, \lb g_h , w_k \rb w_h \rb  - 
\lb f_i \gamma_i \gamma'_h \, , \, \lb g_h , w_h \rb w_k \rb  \, = \\
\lb \gamma'_h f  \, , \, \lb g_h , w_k \rb w_h \rb  - 
\lb \gamma'_h f  \, , \, \lb g_h , w_h \rb w_k \rb  \, = \\
\lb  f  \, , \, \lb g , w_k \rb w_h \rb  - 
\lb  f  \, , \, \lb g , w_h \rb w_k \rb  \, = \\
\lb  g \otimes f  \, , \, w_k \otimes w_h \rb  - 
\lb  g \otimes f  \, , \, w_h \otimes w_k \rb \, = \\
- \lb g \otimes f \, , \, w_h \otimes_- w_k \rb
\, ,
\end{array}
\end{equation}
and conclude that 
$( \lb g| \lb f| + \lb f| \lb g| ) v_- = 0$,
obtaining
%
%
%
%
%
%
%

\begin{equation}
\label{eq.20b}
[ {\bs a}_-(f) \, , \, {\bs a}_-(g) ]_+ \, = \, 0
\ \ , \ \ 
f \bowtie_\circ g \in \efh_\circ \, .
\end{equation}

\medskip 

\paragraph{Mixed Anticommutators.}
Finally we analyze the anticommutators $[ {\bs a}_-(f) \, , \, {\bs a}_-^*(g) ]_+$.
At a first stage we consider $f,g \in \efh_\circ$ without further hypothesis,  
and vectors of the type $v_- =  w_-A \in \efh^n_-$,
with $w_- = w_1 \otimes_- \ldots \otimes_- w_n \in \bsh_-^n$, $A \in \mA$;
then using (\ref{eq.A0.1.PS1a''}) and (\ref{eq.A0.1.PS1a'''}) respectively, we compute
\begin{equation}
\label{eq.CAR.1a}
{\bs a}_-(f){\bs a}_-^*(g) v_- 
\ = \
\lb f,g \rb v_- - 
\sum_k (-1)^{k-1} 
\lb f,w_k \rb \, P_-^n (g \otimes w^{(k)}_-) A
\end{equation}
\begin{equation}
\label{eq.CAR.1b}
{\bs a}_-^*(g) {\bs a}_-(f) v_- 
\, = \, 
\sum_k (-1)^{k-1}  P_-^n (g \lb f,w_k \rb \otimes w^{(k)}_- ) A \, .
\end{equation}
A quick look to the previous equalities is sufficient to realize that the terms
\begin{equation}
\label{eq.CAR.1b'}
\lb f,w_k \rb \, P_-^n (g \otimes w^{(k)}_-) A
\ \ \ , \ \ \ 
P_-^n (g \lb f,w_k \rb \otimes w^{(k)}_- ) A
\end{equation}
may prevent the realization of the anti-commutation relations that one could expect.
In fact, whilst in the case $\mA = \bC$ they eliminate each other leaving only the 
term $\lb f,g \rb v_-$ present, in general they could differ because the scalar products
$\lb f,w_k \rb$ are not free to shift through the elementary tensors.
Thus we adopt the usual hypothesis $f \bowtie_\circ g$
and analyze the terms in (\ref{eq.CAR.1b'}) more in details.

\medskip 

As a first step we write as usual $f=f_i \gamma_i$, $g=g_h\gamma'_h$ and, to be concise,
we write $u_i \doteq u_{\gamma_i}$, $u'_h \doteq u_{\gamma'_h} \in \mU(\bsh)$,
$\lb f,w_k \rb = \gamma_i^* z_{ik} \in \mA(f)$ with $z_{ik} \doteq \lb f_i , w_k \rb \in \bC$.
With this notation, the hypothesis $f \bowtie_\circ g$ implies 
$[ \gamma_i , \gamma'_h ] \ = \ [ \gamma_i^* , \gamma'_h ] = 0$,
so that
\[
[ u_i \, , \, u'_h ] \ = \ [ u_i^* \, , \, u'_h ] \ = \ 0 \, .
\]
For the first expression in (\ref{eq.CAR.1b'}) we compute
\[
\begin{array}{l}
\lb f,w_k \rb \, P_-^n (g \otimes w^{(k)}_-) A \ = \\
(n-1)!^{-1} \sum_{\varrho \in \bP_{n-1,k}} \eps_\varrho \,
\gamma_i^* z_{ik} \, 
P_-^n (g_h \gamma'_h \otimes w_{\varrho(1)} \otimes \ldots \otimes w_{\varrho(n)} ) A  \ = \\
(n-1)!^{-1} \sum_{\varrho \in \bP_{n-1,k}} \eps_\varrho \,
\gamma_i^* z_{ik} \, 
P_-^n (g_h \otimes u'_h w_{\varrho(1)} \otimes \ldots \otimes u'_h w_{\varrho(n)} ) 
\gamma'_h A  \, .
\end{array}
\]
The term $P_-^n (g_h \otimes u'_h w_{\varrho(1)} \otimes \ldots \otimes u'_h w_{\varrho(n)} )$
is a linear combination of terms of the type
\[
\eps_\pi \, u'_h w_{\pi\rho(l)} \otimes \ldots g_h \ldots \otimes u'_h w_{\pi\rho(m)} \, 
\gamma'_h A \, ,
\]
where $\pi$ is any permutation of the terms in the argument of $P_-^n$ and
$g_h$ appears in position $\pi(1)$.
Applying the operators $\lb f,w_k \rb = \gamma_i^* z_{ik}$, we get terms of the type
%
%
\[
\begin{array}{l}
\eps_\pi \, 
\gamma_i^* z_{ik} u'_h w_{\pi\rho(l)} \otimes \ldots g_h \ldots \otimes u'_h w_{\pi\rho(m)} \, 
\gamma'_h A \ = \\
\eps_\pi \, 
u_i^* u'_h w_{\pi\rho(l)} \otimes \ldots \gamma_i^* g_h \gamma'_h \ldots \otimes w_{\pi\rho(m)}
z_{ik} A  \ = \\
\eps_\pi \, 
u_i^* u'_h w_{\pi\rho(l)} \otimes \ldots \gamma_i^* g \ldots \otimes w_{\pi\rho(m)}
z_{ik} A  \ = \\
\eps_\pi \, 
u_i^* u'_h w_{\pi\rho(l)} \otimes \ldots g \gamma_i^* \ldots \otimes w_{\pi\rho(m)}
z_{ik} A  \ = \\
\eps_\pi \, 
u_i^* u'_h w_{\pi\rho(l)} \otimes \ldots g_h \ldots \otimes u'_h u_i^* w_{\pi\rho(m)}
z_{ik} \gamma_h \gamma_i^* A  \ = \\
\eps_\pi \, 
u_i^* u'_h w_{\pi\rho(l)} \otimes \ldots g_h \ldots \otimes u_i^* u'_h  w_{\pi\rho(m)}
z_{ik} \gamma_i^* \gamma_h A  \, .
\end{array}
\]
In conclusion,
\[
\begin{array}{l}
\lb f,w_k \rb \, P_-^n (g \otimes w^{(k)}_-) A \ = \\
(n-1)!^{-1} \sum_{\varrho,\pi} \eps_\varrho\eps_\pi  \,
u_i^* u'_h w_{\pi\rho(l)} \otimes \ldots g_h \ldots \otimes u_i^* u'_h  w_{\pi\rho(m)}
z_{ik} \gamma_i^* \gamma_h A
\, .
\end{array}
\]
Finally we evaluate the second term in (\ref{eq.CAR.1b'}),
\[
\begin{array}{l}
P_-^n (g \lb f,w_k \rb \otimes w^{(k)}_- ) A \ = \\
(n-1)!^{-1} \sum_{\varrho \in \bP_{n-1,k}} \eps_\varrho \,
P_-^n 
(g_h \gamma'_h \gamma_i^* z_{ik} \otimes 
w_{\varrho(1)} \otimes \ldots \otimes w_{\varrho(n)} ) A \ = \\
(n-1)!^{-1} \sum_{\varrho \in \bP_{n-1,k}} \eps_\varrho \,
P_-^n 
(g_h \gamma_i^* \gamma'_h  z_{ik} \otimes 
w_{\varrho(1)} \otimes \ldots \otimes w_{\varrho(n)} ) A \ = \\
(n-1)!^{-1} \sum_{\varrho \in \bP_{n-1,k}} \eps_\varrho \,
P_-^n 
(g_h  \otimes 
u_i^* u'_h w_{\varrho(1)} \otimes \ldots \otimes u_i^* u'_h w_{\varrho(n)} ) 
z_{ik} \gamma_i^* \gamma'_h A \, = \\ 
(n-1)!^{-1} \sum_{\varrho,\pi} \eps_\varrho \eps_\pi \,
u_i^* u'_h w_{\varrho(1)} \otimes \ldots g_h \ldots \otimes u_i^* u'_h w_{\varrho(n)}
z_{ik} \gamma_i^* \gamma'_h A 
\, ,
\end{array}
\]
concluding that 
\[
\lb f,w_k \rb \, P_-^n (g \otimes w^{(k)}_-) A
\ = \ 
P_-^n (g \lb f,w_k \rb \otimes w^{(k)}_- ) A
\]
actually conspire to obtain, starting from (\ref{eq.CAR.1a}) and (\ref{eq.CAR.1b}),
the anti-commutation relations

\medskip

\begin{equation}
\label{eq.CAR.1}
[ {\bs a}_-(f) \, , \, {\bs a}_-^*(g) ]_+  \ = \ \lb f,g \rb v_-
\ \ \ , \ \ \ 
f \bowtie_\circ g \, \in \efh_\circ \, .
\end{equation}

\medskip 

\noindent

\paragraph{Norm and generalized CARs.}
As a particular case we now consider $f,g \in \bsh$
that clearly implies $f \bowtie_\circ g$;
moreover, we have $\lb f , w \rb \in \bC$ for all $w \in \bsh$,
thus the undesirable terms in (\ref{eq.CAR.1}) vanish. 
As a consequence we find
\begin{equation}
\label{eq.acr.hilb}
[ {\bs a}_-(f) \, , \, {\bs a}_-^*(g) ]_+ \, = \, \lb f,g \rb \in \bC
\ \ , \ \ 
[ {\bs a}_-^*(f) \, , \, {\bs a}_-^*(g) ]_+ \, = \, 0
\ \ , \ \ 
f,g \in \bsh \, .
\end{equation}
Now, ${\bs a}_-(f)$ and ${\bs a}_-^*(g)$ act like the usual annihilation
and creation operators when restricted to the Fock space 
$\mF_-(\bsh) \subset \efF_-(\efh)$:
we denote the corresponding restrictions by $a_-(f)$, $a_-^*(g)$
(without bold font), and note that $\| a_-(f) \| = \| a_-^*(f) \| = \| f \|$
\cite[Vol.2, Prop.5.2.2]{BR}. 
Given $w_- \in \mF_-(\bsh)$, $A \in \mA$, by right $\mA$-linearity we have
\[
{\bs a}_-(f) (w_-A) \ = \ (a_-(f)w_-)A
\ \ \ , \ \ \ 
{\bs a}_-^*(f) (w_-A) \ = \ (a_-^*(f)w_-)A
\ \ \ , \ \ \ 
f \in \bsh
\, ,
\]
so that 
\[
\begin{array}{lcl}
\| \lb {\bs a}_-(f) (w_-A) \, , \, {\bs a}_-(f) (w_-A) \rb \|  & = &
\| A^* \lb w_- , a_-^*(f) a_-(f)w_- \rb A \|  \leq \\ & \leq &
\| f \|^2 \| A^* \lb w_- , w_- \rb A \|  = \\ & = &
\| f \|^2 \| w_-A \|^2 \, ,
\end{array}
\]
having used the fact that $\| f \|^2 - a_-^*(f) a_-(f)$ is a positive operator
on $\mF_-(\bsh)$.
%
%
%
%
Thus $\| {\bs a}_-(f) \| = \| {\bs a}_-^*(f) \|  = \| f \|$
for all $f \in \bsh$.
We use this property to prove the following, more general, result:
\begin{lem}
\label{lem.ca.b}
Let $f \in \efh_\circ$. Then ${\bs a}_-(f)$ and ${\bs a}_-^*(f)$ are bounded.
\end{lem}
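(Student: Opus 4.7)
The plan is to reduce boundedness of both operators to that of the ordinary fermionic creation operator on the Fock space $\mF_-(\bsh)$, exploiting the fact that under the identification $\efF_-(\efh) \simeq \mF_-(\bsh)\mA$ of Lemma \ref{lem.fM.01} the $\mG$-twist only contributes a unitary factor on the Fock part together with a right multiplication by a unitary of $\mA$ on the module part. Since $f \mapsto {\bs a}_-^*(f)$ is manifestly $\bC$-linear and $\efh_\circ$ is the linear span of vectors of the form $w\gamma$ with $w \in \bsh$ and $\gamma \in \mG$, the triangle inequality reduces the problem to bounding $\|{\bs a}_-^*(w\gamma)\|$ for each such generator.

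For such a generator I would apply Lemma \ref{lem.Gtwist} (Point 2) with the single-term decomposition $g_1 = w$, $\gamma_1 = \gamma$ to compute, on any elementary antisymmetric tensor $w_- A \in \efh_-^n$ with $w_- \in \bsh_-^n$ and $A \in \mA$,
\[
{\bs a}_-^*(w\gamma)(w_-A) \;=\; \sqrt{n+1}\,P_-^{n+1}(w\gamma\otimes w_-A) \;=\; \bigl(a_-^*(w)\,u_\gamma^- w_-\bigr)\,\gamma A,
\]
where $a_-^*(w)$ is the ordinary fermionic creation operator on $\mF_-(\bsh)$ and $u_\gamma^- = \otimes^n u_\gamma$ acts unitarily on $\bsh^n_-$. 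Let $U_\gamma \in \mU(\mF_-(\bsh))$ denote the direct sum of the $u_\gamma^-$; then right $\mA$-linearity extends the displayed formula to ${\bs a}_-^*(w\gamma)(\xi A) = (a_-^*(w)U_\gamma \xi)\,\gamma A$ for every $\xi$ in the finite-particle part of $\mF_-(\bsh)$. The three factors on the right are bounded in the natural sense: $a_-^*(w)$ with norm $\|w\|$ by \cite[Vol.2, Prop.5.2.2]{BR} (as recalled just above the statement), $U_\gamma$ is unitary, and left multiplication by the unitary $\gamma \in \mA$ is an isometry of $\mA$ regarded as a right $\mA$-module. Assembling these pieces on the Hilbert-module tensor product yields $\|{\bs a}_-^*(w\gamma)\| \leq \|w\| = \|w\gamma\|$, and linearity finishes the proof for all $f \in \efh_\circ$.

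Boundedness of ${\bs a}_-(f)$ then follows from Lemma \ref{lem.Gtwist3} via the Hilbert-module Cauchy--Schwarz inequality $\|\lb v,w\rb\|\leq\|v\|\|w\|$: substituting $v = {\bs a}_-(f)v'$ into $\lb v,\,{\bs a}_-(f)v'\rb = \lb{\bs a}_-^*(f)v,\,v'\rb$ gives
\[
\|{\bs a}_-(f)v'\|^{2} \;\leq\; \|{\bs a}_-^*(f)\|\;\|{\bs a}_-(f)v'\|\;\|v'\|,
\]
hence $\|{\bs a}_-(f)\|\leq\|{\bs a}_-^*(f)\|$. The only mildly delicate point in the plan is the passage from the identities derived on elementary tensors $w_-A$ to an operator defined on all of $\efF_-^\#(\efh)$ (and thence on $\efF_-(\efh)$ by density), but this is automatic because both sides of the key formula are right $\mA$-linear by construction, so the identification with $a_-^*(w)U_\gamma \otimes L_\gamma$ extends unambiguously.
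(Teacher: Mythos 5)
Your proposal is correct and follows essentially the same route as the paper: both arguments reduce to the case $f\in\bsh$, where the operators restrict by right $\mA$-linearity to the ordinary CAR operators on $\mF_-(\bsh)$ with norm $\|f\|$, and then peel off the group unitaries $\gamma_i$ as norm-one module operators before summing over the finite decomposition $f=f_i\gamma_i$. The only difference is bookkeeping: the paper treats the annihilation operator directly via ${\bs a}_-(f)=\sum_i\gamma_i^*\,{\bs a}_-(f_i)$ and leaves the creation operator implicit, whereas you factor the creation operator first and then recover ${\bs a}_-(f)$ from the adjoint relation of Lemma \ref{lem.Gtwist3} together with the Hilbert-module Cauchy--Schwarz inequality, which fills in a step the paper omits.
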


\begin{proof}
We start proving our assertion for the annihilation operator.
Given $f = f_i \gamma_i$, $f_i \in \bsh$, $\gamma_i \in \mG$ 
(finite sum), for the usual elementary tensors $w_-A \in \efh_-^n$ we compute
\[ 
\begin{array}{lcl}
{\bs a}_-(f) w_-A & = &
{\bs a}_-(f_i \gamma_i) w_-A \, = \\ & = &
\frac{1}{\sqrt{n}} \sum_k (-1)^{k-1} \gamma_i^* \lb f_i , w_k \rb \, w^{(k)}_-A \, = \\ & = &
\sum_i \gamma_i^* \, {\bs a}_-(f_i) w_-A \, .
\end{array}
\]
The previous relations say that ${\bs a}_-(f)$ is sum of the operators 
$\gamma_i^* {\bs a}_-(f_i)$, where $\gamma_i^*$ are regarded as unitary operators on $\efh$ and
${\bs a}_-(f_i)$ are, by the previous remarks, bounded. 
Thus we conclude that ${\bs a}_-(f)$ is bounded for $f$ (finite) linear combination 
in $\efh_\circ$.
\end{proof}

\medskip


In the following result we give a synthesis of
(\ref{eq.20a}),
(\ref{eq.20b}),
(\ref{eq.CAR.1}), and
Lemma \ref{lem.ca.b}.
%

\medskip 

\begin{thm}
\label{thm.A0.1}
Let $\efh = \bsh\mA$ be a free Hilbert $\mA$-bimodule with twist 
$u : \mG \to \mU(\bsh)$, $\mG \subseteq \mU\mA$. 
Then for any $f,g \in \efh_\circ$ the creation and annihilation operators
are bounded right $\mA$-module operators on $\efF_-(\efh)$, and
the following properties hold:
\textbf{(1)} If $f \bowtie_\circ g$, then
\begin{equation}
\label{eq.CAR.20'a}
[ {\bs a}_-(f) \, , \, {\bs a}_-(g) ]_+
\ = \ 
[ {\bs a}_-^*(f) \, , \, {\bs a}_-^*(g) ]_+ 
\  = \  
0 \, ,
\end{equation}
and
\begin{equation}
\label{eq.CAR.20'}
[ {\bs a}_-(f) \, , \, {\bs a}_-^*(g) ]_+ \, = \, \lb f,g \rb 
\in  \mA \, .
\end{equation}

\medskip

\noindent 
\textbf{(2)} If $f,g \in \bsh$, then the previous anti-commutation relations 
hold with $\lb f , g \rb \in \bC$.
\end{thm}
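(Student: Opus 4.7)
My plan is to recognize that this theorem is essentially a consolidation of results already established piecewise in the preceding subsections, so the proof should amount to assembling the pieces and checking that nothing is lost when passing from the dense domain $\efF_-^\#(\efh)$ to all of $\efF_-(\efh)$.

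First I would dispatch the boundedness and module-theoretic assertions. Boundedness of ${\bs a}_-(f)$ for $f \in \efh_\circ$ is exactly Lemma \ref{lem.ca.b}. Boundedness of ${\bs a}_-^*(f)$ then follows from Lemma \ref{lem.Gtwist3}, which identifies ${\bs a}_-^*(f)$ with the adjoint of ${\bs a}_-(f)$ on the common domain, together with the fact that bounded adjointable operators have the same norm as their adjoints. Right $\mA$-module linearity is inherited at each step: $a(f)$ is right $\mA$-linear from its defining formulas (\ref{eq.0.04})--(\ref{eq.0.05}); $a^*(f)$ is right $\mA$-linear from (\ref{eq.A0.1.3}); and $P_-$ is right $\mA$-linear by (\ref{eq.fM.06}). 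Thus the restrictions in (\ref{eq.fFs.01}) extend by continuity to bounded right $\mA$-module operators on all of $\efF_-(\efh)$.

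For part (1), the three identities are precisely (\ref{eq.20a}), (\ref{eq.20b}), (\ref{eq.CAR.1}), which were derived by direct (if intricate) computation on elementary antisymmetric tensors $w_-A \in \efh_-^n$ under the standing hypothesis $f \bowtie_\circ g \in \efh_\circ$. Since such tensors span a dense subspace of $\efF_-(\efh)$ and the operators are bounded from the previous paragraph, the identities propagate to the whole module.

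For part (2), if $f,g \in \bsh$ I would write $f = f \cdot \mathbf{1}$ and $g = g \cdot \mathbf{1}$ with the single group element $\mathbf{1} \in \mG$; then the conditions (\ref{eq.split}) are trivially satisfied (since $u_{\mathbf{1}} = \mathrm{id}$), so $f \bowtie_\circ g$ and part (1) applies. The scalar product $\lb f,g \rb$ lies in $\bC \mathbf{1}$ by the definition of a Hilbert space inside $\efh$, refining the right-hand side of (\ref{eq.CAR.20'}) to $\bC$. There is no genuine obstacle remaining: the main technical difficulty — showing that the two potentially discrepant terms in (\ref{eq.CAR.1b'}) actually coincide, which is what made (\ref{eq.CAR.1}) work — has already been carried out in the lead-up to the statement, and the hypothesis $f \bowtie_\circ g$ is precisely tailored to enable the shuffle of twists $u_i, u'_h$ and generators $\gamma_i, \gamma'_h$ through the tensor factors there.
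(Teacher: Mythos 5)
Your proposal is correct and follows essentially the same route as the paper, which explicitly presents Theorem \ref{thm.A0.1} as a synthesis of (\ref{eq.20a}), (\ref{eq.20b}), (\ref{eq.CAR.1}) and Lemma \ref{lem.ca.b}, with the case $f,g \in \bsh$ handled exactly as you do (trivial twist, hence $f \bowtie_\circ g$, and $\lb f,g\rb \in \bC$). Your explicit remarks on extending from the dense domain $\efF_-^\#(\efh)$ by boundedness and on deducing boundedness of ${\bs a}_-^*(f)$ via adjointability are left implicit in the paper but are accurate.
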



\medskip 

\paragraph{Dirac fields.}
Let $\mU_*(\bsh)$ denote the set of anti-unitary operators on $\bsh$,
and $\kappa = \kappa^* \in \mU_*(\bsh)$ a conjugation such that
\begin{equation}
\label{eq.k}
[ \kappa \, , \, u_\gamma ] \, = \, 0 
\ \ \ , \ \ \ 
\forall \gamma \in \mG \subseteq \mU\mA 
\, .
\end{equation}
Setting $\kappa(vA) \doteq (\kappa v)A^*$, $v \in \bsh$, $A\in \mA$,
we extend $\kappa$ to the vector space spanned by elementary tensors in $\efh$,
obtaining a densely defined antilinear map.
Note that in particular $\kappa(v\gamma) = (\kappa v)\gamma^*$,
so $\kappa$ is defined on $\efh_\circ$.
%
%
In the following result we check the compatibility of $\kappa$
with the mutual freeness relation (\ref{eq.split}):
\begin{lem}
\label{lem.conj}
The following properties hold:
\textbf{(1)} $\mA(f) = \mA(\kappa f)$ for all $f \in \efh_\circ$;
\textbf{(2)} If $f,g \in \efh_\circ$ and $f \bowtie g$, then $\lb f , 
             \kappa g \rb = \lb g , \kappa f \rb$;
\textbf{(3)} Let $f \bowtie_\circ g$;
             then $f \bowtie_\circ \kappa g$, $\kappa g \bowtie_\circ f$ and $\kappa f \bowtie_\circ \kappa g$.
\end{lem}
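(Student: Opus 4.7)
The plan is to reduce everything to orthonormal decompositions in $\bsh$ and then exploit antiunitarity of $\kappa$, the involution property $\kappa^2=1$, and the commutation $[\kappa,u_\gamma]=0$ from (\ref{eq.k}).

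For Part (1), given $f = f_i \gamma_i \in \efh_\circ$ I would replace the (possibly non-orthogonal) vectors $\{f_i\}$ by an orthonormal family $\{e_k\} \subset \bsh$ spanning the same subspace, writing $f = e_k B_k$ where each $B_k \in \mA$ is a $\bC$-linear combination of $\{\gamma_i\}$. The property stated below (\ref{eq.Pauli2}) gives $B_k = \lb e_k, f \rb \in \mA(f)$, and since any generator $\lb v, f \rb$ is a $\bC$-linear combination of the $B_k$'s, one gets $\mA(f) = C^*\{B_k\}$. Applying the extension $\kappa(vA)=(\kappa v)A^*$ yields $\kappa f = (\kappa e_k) B_k^*$, and antiunitarity makes $\{\kappa e_k\}$ orthonormal; the same argument then gives $\mA(\kappa f) = C^*\{B_k^*\}$. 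But a C*-algebra is closed under the involution, so $C^*\{B_k\} = C^*\{B_k^*\}$ and the two supports coincide.

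For Part (2), I would use the same orthonormal decompositions $f = e_k B_k$ and $g = e'_l B'_l$ and expand, via (\ref{eq.0.00}),
\[
\lb f , \kappa g \rb \, = \, B_k^* \, \lb e_k , \kappa e'_l \rb \, B'_l{}^{*}
\ \ \ , \ \ \
\lb g , \kappa f \rb \, = \, B'_l{}^{*} \, \lb e'_l , \kappa e_k \rb \, B_k^* \, .
\]
The scalars $\lb e_k, \kappa e'_l \rb$ and $\lb e'_l, \kappa e_k \rb$ coincide as complex numbers by antiunitarity of $\kappa$ together with $\kappa^2 = 1$ (explicitly, $\lb e_k, \kappa e'_l \rb = \overline{\lb \kappa e'_l, e_k \rb} = \overline{\lb \kappa e_k, e'_l \rb} = \lb e'_l, \kappa e_k \rb$, using $\lb \kappa u , v \rb = \lb \kappa v , u \rb$). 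The hypothesis $f \bowtie g$ yields $[B_k, B'_l]=0$ since $B_k \in \mA(f)$ and $B'_l \in \mA(g)$, hence $[B_k^*, B'_l{}^*]=0$, and the two displayed expressions agree.

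For Part (3), with $f = f_i \gamma_i$, $g = g_h \gamma'_h$ and $\kappa g = (\kappa g_h)\gamma'_h{}^*$, I would verify the three clauses of (\ref{eq.split}) for $f \bowtie_\circ \kappa g$ directly: $[\gamma_i, \gamma'_h{}^*]=0$ follows from $[\gamma_i, \gamma'_h]=0$ by conjugating with $\gamma'_h{}^*$; $u_{\gamma_i}(\kappa g_h) = \kappa(u_{\gamma_i} g_h) = \kappa g_h$ uses (\ref{eq.k}); and $u_{\gamma'_h{}^*} f_i = u_{\gamma'_h}^{-1} f_i = f_i$ uses that $u : \mG \to \mU(\bsh)$ is a group morphism. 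The relation $\kappa g \bowtie_\circ f$ is then immediate by symmetry of $\bowtie_\circ$ in its two arguments, and $\kappa f \bowtie_\circ \kappa g$ follows by the same pattern applied to both slots. The main bookkeeping obstacle will lie in Part (2), where one must keep careful track of complex conjugates and adjoints when moving elements of $\mA$ across scalar products involving the antilinear $\kappa$.
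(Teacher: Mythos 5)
Your proof is correct and follows essentially the same route as the paper's: orthonormal decompositions $f = e_k B_k$ with $B_k = \lb e_k , f \rb \in \mA(f)$ for (1) and (2), antiunitarity of the self-adjoint conjugation $\kappa$ together with $[\mA(f),\mA(g)]=0$ for (2), and the relations $[\kappa,u_\gamma]=0$ and the group-morphism property of $u$ for (3). The only (harmless) difference is that in (3) you verify the defining clauses of (\ref{eq.split}) directly, whereas the paper checks the resulting commutation identity $(\kappa g)\gamma_i=\gamma_i(\kappa g)$; both rest on the same identities.
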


\begin{proof}
\textbf{(1)} We can write $f = w_i A_i$ with $\{ w_i \}$ an orthogonal base in $\bsh$
and $A_i \doteq \lb w_i , f \rb \in \mA$: this implies that $\mA(f)$ is generated by the set $\{ A_i \}$.
On the other hand, $\kappa f = \kappa(w_i) A_i^*$ where also $\{ \kappa w_i \}$ is a base of $\bsh$,
implying that $\mA(\kappa f)$ is generated by $\{ A_i^* \}$.
Thus $\mA(f) = \mA(\kappa f)$ as claimed.
\textbf{(2)} Writing $g = v_h B_h$, $v_h \in \bsh$, $B_h \in \mA(g)$, we get
\[
\lb f , \kappa g \rb \ = \ 
A_i^* B_h^* \lb w_i , \kappa v_h \rb \ = \ 
B_h^* A_i^* \lb v_h , \kappa w_i \rb \ = \ 
\lb v_h B_h , (\kappa w_i) A_i^* \rb \ = \ 
\lb g , \kappa f \rb \, ,
\]
having used the fact that $f \bowtie g$ implies $[ A_i , B_h ] = 0$.
\textbf{(3)} We write $f = f_i\gamma_i$, $g = g_h\gamma'_h$ 
and check that $f \bowtie_\circ \kappa g$:
\[
\begin{array}{lcl}
(\kappa g) \gamma_i & = &
\kappa (g_h \gamma'_h) \gamma_i \ = \ 
(\kappa g_h) \gamma_i {\gamma'_h}^*  \ = \   
\gamma_i (u_{\gamma_i}^* \kappa g_h ) {\gamma'_h}^*  \\ & = &
\gamma_i ( \kappa u_{\gamma_i}^*  g_h ) {\gamma'_h}^* \ = \ 
\gamma_i ( \kappa g_h ) {\gamma'_h}^*  \ = \ \gamma_i (\kappa g) \, .
\end{array}
\]
The other cases are verified in an analogous way, so the Lemma is proved.
\end{proof}

\medskip 

A \emph{Dirac triple over $\mA$}, written $( \efh , u , \kappa )$, is given by
a free Hilbert $\mA$-bimodule $\efh = \bsh\mA$ with $\mG$-twist $u$,
and a conjugation $\kappa \in \mU_*(\bsh)$ fulfilling (\ref{eq.k}).
%
The (self-dual) \emph{Dirac field} associated with
$( \efh , u , \kappa )$
is defined by
\begin{equation}
\label{eq.FF01}
\hat{\psi}(f) \ \doteq \, \frac{1}{\sqrt{2}} \, ( {\bs a}_-^*(f) + {\bs a}_-(\kappa f) ) 
\ \ , \ \ 
f \in \efh_\circ \subseteq \efh \, .
\end{equation}
It yields operators $\hat{\psi}(f) \in \efB(\efF_-(\efh))$, and
by Lemma \ref{lem.Gtwist3} we have
\begin{equation}
\label{eq.FF01.sd}
\hat{\psi}^*(f) \ = \ \hat{\psi}(\kappa f) \, .
\end{equation}
By applying
Lemma \ref{lem.Gtwist4}, Theorem \ref{thm.A0.1} and Lemma \ref{lem.conj}
we obtain

\begin{equation}
\label{eq.FF02.sd}
\left\{
\begin{array}{lll}
[ \hat{\psi}(f) \, , \, \hat{\psi}(g) ]_+ 
\ = \ 
\lb \kappa f,g \rb \in \mA 
& , & 
f \bowtie_\circ g \in \efh_\circ \, ,
\\ \\
\gamma \hat{\psi}(w) \ = \ \hat{\psi}(u_\gamma w) \gamma
& , &
w \in \bsh \, , \, \gamma \in \mG \subseteq \mU\mA \, .
\end{array}
\right.
\end{equation}

\noindent We denote the C*-algebra generated by the operators 
$\hat{\psi}(f) , \gamma \in \efB(\efF_-(\efh))$, $f \in \efh_\circ$, $\gamma \in \mG$,
by $\mF_{\efh,u,\kappa}$, and call it the \emph{field C*-algebra} of $(\efh,u,\kappa)$.
By construction $\mF_{\efh,u,\kappa}$ contains $\lambda(\mA)$ and the CAR algebra $\mC_\bsh$,
and fulfils the relations (\ref{eq.FF01.sd}) and (\ref{eq.FF02.sd})
{\footnote{
Relations similar to (\ref{eq.FF02.sd}) appeared in \cite{Her96,Her98} 
in the special case where $\mA$ is a Weyl algebra describing 
suitable asymptotic configurations of the electromagnetic field. 
Whilst we use the C*-norm induced by our Fock bimodule, 
the C*-algebra of the above reference is endowed with a maximal C*-norm,
that is non-trivial because a (non-separable) representation is exhibited.
}}.
Any state $\omega \in \mS(\mA)$ induces a Hilbert space representation 
of $\mF_{\efh,u,\kappa}$, defined as in (\ref{eq.0.07'}).
We remark that at the abstract level at which we worked 
no topology has been defined on $\efh_\circ$, thus no continuity property
is required for $\hat{\psi}(f)$ at varying of $f \in \efh_\circ$.

\paragraph{A class of fixed-time models.}
We briefly present a family of models for the notion of Dirac triple
and the associated Dirac field. A more detailed exposition of these and other models is
postponed to a future publication.

\medskip 

We start by considering the symplectic space $\mS$
given by pairs of compactly supported test functions 
$s = (s_0 , s_1) \in \mS(\bR^3) \oplus \mS(\bR^3)$,
with symplectic form
\[
\eta(s,s') \ \doteq \ \int ( s_1  s'_0 - s_0 s'_1 )
\]
(Lebesgue measure) and the associated Weyl C*-algebra $\mW$.
It is readily seen that $\mW$ is the C*-algebra associated 
to the restriction at a fixed time of the free scalar field,
with 
\[
W(s) \ = \ e^{i (\phi(s_0) + \dot\phi(s_1) )} \, .
\]
Here, the field $\phi(s_0)$ and its conjugate $\dot\phi(s_1)$, $s_0 , s_1 \in \mS(\bR^3)$, 
are the initial conditions at time $t_0$ of the free scalar field \cite[\S 8.4.A]{BLOT}.
As explained in Example \ref{ex.twist2}, any unitary morphism of $\mS$ as an additive group
yields a unitary morphism of the group $\mG$ generated by Weyl unitaries
and phases.

\medskip 

We then consider the Hilbert spaces 
$\bsh_+ \doteq L^2(\bR^3,\bC^4)$, $\bsh_- \doteq L^2(\bR^3,\bC^{4,*})$,
where $\bC^{4,*}$ is the conjugate space, and define $\bsh \doteq \bsh_+ \oplus \bsh_-$
with conjugation
\[
\kappa \in \mU_*(\bsh)
\ \ \ , \ \ \ 
\kappa(w_+ \oplus \bar{w}_- ) \, \doteq \,  w_- \oplus \bar{w}_+
\]
(here $\bar{w}$ is the conjugate map $\bar{w}(w') \doteq \lb w,w' \rb$
defined by $w \in \bsh_+$).
Given a tempered distribution $\sigma \in \mS'(\bR^3)$, we consider the unitary morphism
\[
u_\sigma : \mS \to \mU(\bsh) 
\ \ , \ \
u_{\sigma,s}(w_+ \oplus \bar{w}_-) \, \doteq \, 
e^{-i \sigma \star s_0}w_+ \oplus e^{i \sigma \star s_0}\bar{w}_- 
\ \ , \ \ 
w \in \bsh \, ,
\]
where $\sigma \star s_0 \in C^\infty(\bR^3)$ is the convolution (note that $s_1$ does not come into play).
It is then clear that $\kappa$ fulfils (\ref{eq.k}).
%

\medskip

We are now in condition to form the free Hilbert bimodule $\efh = \bsh\mW$ carrying the twisting
defined by $u_\sigma$; we have $\efh = \efh_+ \oplus \efh_-$ 
with obvious meaning of the symbols, and any $(\efh,u_\sigma,\kappa)$ is a Dirac triple over $\mW$.
With this input, we have the self dual Dirac field
$\hat{\psi}(h)$, $h \in \efh_\circ$,
from which for convenience we extract the electron field
$\psi(f) \doteq \hat{\psi}(f \oplus 0)$,
$f \in \efh_{+,\circ}$,
fulfilling the relations

\medskip 

\begin{equation}
\label{eq.FF02.sd.ex}
\left\{
\begin{array}{lll}
\left[ \psi^*(f) \, , \, \psi(g) \right]_+ 
\ = \ 
\lb f,g \rb \in \mW
& , & 
f \bowtie_\circ g \in \efh_{+,\circ} \, ,
\\ \\
W(s) \psi(w) \ = \ \psi(e^{-i \sigma \star s_0} w) W(s)
& , &
w \in \bsh_+ \, , \, s \in \mS \, .
\end{array}
\right.
\end{equation}

\medskip 

\noindent 
We denote the associated field C*-algebra by $\mF_\sigma$.
It is endowed with the gauge action
\[
\beta : \bU(1) \to \Aut \mF_\sigma 
\ \ \ , \ \ \ 
\beta_z(\psi(f)) \, \doteq \, \bar{z} \psi(f)
\, .
\]
Now, given 
$f = f_i W(s_i) \in \efh_{+,\circ}$,
$f_i \in \bsh_+$, $s_i \in \mS$,
we define the support $\supp(f) \subset \bR^3$ as the union of the "fermionic" and "bosonic" supports
\[
\supp_\psi(f) \doteq \cup_i \supp(w_i)
\ \ \ , \ \ \ 
\supp_\mW(f) \doteq \cup_i \supp(s_i)
\, ,
\]
and introduce the C*-algebras 
$\mF_\sigma(A)$
generated by those $\psi(f)$ having support in the open set $A \subset \bR^3$.
There are two subnets $\mC_\bsh$ and $\mW$ of $\mF_\sigma$, the first defined 
by the operators $\psi(w)$, $w \in \bsh_+$, and the second given by 
the unitaries $W(s)$, $s \in \mS$: the two subnets are defined by the free Dirac field 
and the free scalar field respectively. In particular,
\begin{equation}
\label{eq.mod00}
[ \psi(w) , \psi(w')   ]_+ \ = \ 
[ \psi^*(w_1) , \psi(w_2) ]_+ \ = \ 0
\, ,
\end{equation}
for all $w,w' \in \bsh_+$ and $w_1 , w_2 \in \bsh_+$ 
such that $\supp(w_1) \cap \supp(w_2) = \emptyset$.
We discuss the field net $\mF_\sigma$ for several choices of 
$\sigma \in \mS'(\bR^3)$.
\begin{itemize}
    \item[1.] \emph{$\sigma$ is the Dirac delta at the origin}.
          In this case $\sigma \star s_0 = s_0$ and the unitaries $W(s)$ induce 
          by adjoint action the local gauge transformations
          \[
          \psi(w)  \, \to \, \psi( e^{-i s_0}w ) \, .
          \]
          If $\supp(w) \cap \supp(s_0) = \emptyset$
          then $u_{\sigma,s} w = w$ and $[ \psi(w) , W(s) ] = 0$. 
          Therefore the subnets $\mC_\bsh$ and $\mW$ are relatively local, that is, 
          $\mC_\bsh(A) \subset \mW(B)'$ for $A \cap B = \emptyset$,
          and $\mF_\sigma$ is local in the sense that it fulfils normal commutation relations. 
          For $A \cap B \neq \emptyset$ the second of (\ref{eq.FF02.sd.ex}) in general holds
          with $u_{\sigma,s}w \neq w$ and $[ W(s) , \psi(w) ] \neq 0$.
          A sufficient condition to having $f \bowtie_\circ g$ is 
          $\supp_a(f) \cap \supp_b(g) = \emptyset$
          for all combinations in $a,b = \psi , \mW$ different from $a=b=\psi$:
          in this case
          \begin{equation}
          \label{eq.mod02}
          [ \psi^*(f) , \psi(g) ]_+ \, = \, 
          \sum_{ij} \lb f_i , g_j \rb W(s'_j - s_i) \, \in \mW \, ,
          \end{equation}
          having written $g = g_j W(s'_j)$.
          If $f,g$ are not mutually free, then terms of the type
          (\ref{eq.CAR.1a}-\ref{eq.CAR.1b}), that are not in $\mW$,
          appear in the corresponding anticommutator.
    \item[2.] \emph{$\sigma$ has support with a non-empty interior and contained
          in the 3--ball $B_r$, $r \in ( 0,\infty]$}.
          In this case $\supp(\sigma \star s_0) \subset \supp(s_0) + B_r$.
          We may have $u_{\sigma,s}w = e^{-i\sigma \star s_0}w \neq w$ 
          even for $\supp(s) \cap \supp(w) = \emptyset$, and
          \[
          W(s) \psi(w) \ = \ \psi(e^{-i\sigma \star s_0} w) W(s)
          \]
          implying that in general $\mC_\bsh$ and $\mW$ are not relatively local.
          If $f,g \in \efh_{+,\circ}$, then a sufficient condition to having 
          $f \bowtie_\circ g$
          is that $(\supp_a(f) + B_r) \cap (\supp_b(g) + B_r) = \emptyset$ for 
          $(a,b) \neq (\psi,\psi)$; 
          in that case (\ref{eq.mod02}) holds. Again, terms of the type
          (\ref{eq.CAR.1a}-\ref{eq.CAR.1b}) appear for $f,g$ not mutually free.
          %
          %
          %
    \item[2.1]  \emph{$\sigma$ is the fundamental solution of 
         the Poisson equation} \cite[\S 9.4]{Con}. In this case
         \[
         (\sigma \star s_0)(\bsx) \, = \, 
         \frac{1}{4\pi} \int \frac{1}{|\bsx -\bsy|} s_0(\bsy) \, d^3\bsy 
         \]
         in general has non-compact support and is non-constant
         (for example, take $s_0$ a non-negative bump function supported around the origin). 
         $\mW$ is not relatively local both to $\mC_\bsh$ and 
         the fixed-point subnet $\mC_\bsh^\beta$;
         in particular,
         \begin{equation}
         \label{eq.mod03}
         W(s) \psi(w_1) \psi^*(w_2) \ = \ 
         \psi(e^{-i\sigma \star s_0} w_1) \, \psi^*(e^{i\sigma \star s_0} w_2) W(s)
         \end{equation}
         even when $\supp(s)$ is disjoint from the supports of $w_1 , w_2$.
         %
         %
    \item[2.2.] \emph{$\sigma$ is the Lebesgue measure}.
          In this case
          \[
          (\sigma \star s_0)(\bsx) \, = \, 
          \int s_0(\bsx - \bsy) \, d^3\bsy \, = \, 
          - \lb s_0 \rb \, \doteq \, 
          - \int s_0
          \]
          is constant and 
          \begin{equation}
          \label{eq.mod01}
          W(s) \psi(w) \ = \ e^{i\lb s_0 \rb} \psi(w) W(s)
          \end{equation}
          for all $s \in \mS$ and $w \in \bsh_+$. 
          The above equality has a two-fold interpretation. 
          The first is that the Weyl unitaries $W(s)$ induce the global
          gauge transformations $e^{i \lb s_0 \rb}$ on the charged fields $\psi(w)$.
          The second is that the operators $\psi(w)$
          intertwine the identity and the automorphism
          $\alpha \in \Aut \mW$, $\alpha(W(s)) \doteq e^{-i \lb s_0 \rb}W(s)$,
          %
          %
          so that (\ref{eq.mod01}) becomes
          \begin{equation}
          \label{eq.mod01'}
           \alpha(W(s)) \psi(w) \ = \ \psi(w) W(s)  \, .
          \end{equation}
          The net $\mF_\sigma$ is not local, in fact
          $\mC_\bsh(A)$ and $\mW(B)$ are never one in the commutant of the other.
          Yet by (\ref{eq.mod01}) we have that 
          $\mW$ is in the commutant of $\mC_\bsh^\beta$
          and
          $\mC_\bsh$ is in the commutant of the subalgebra $\mW^0$ generated by test functions $s$ 
          with $\lb s_0 \rb = 0$.
          A local, gauge-invariant subnet $\mA$ of $\mF_\sigma$ is the one generated by operators of the type
          $A_{s,w_1,w_2}  \doteq  \psi(w_1) W(s) \psi^*(w_2)$,
          $w_1 , w_2 \in \bsh_+  , s \in \mS$,
          in fact 
          \[
          [ \, A_{s,w_1,w_2} \, \, , \, A_{s',w'_1,w'_2} \, ] \, = \, 0
          \]
          for
          $
          ( \supp(s) \cup \supp(w_1) \cup \supp(w_2) ) 
          \cap
          ( \supp(s') \cup \supp(w'_1) \cup \supp(w'_2) ) 
          =
          \emptyset
          $,
          having used (\ref{eq.mod00}), (\ref{eq.mod01}) and
          $[ W(s) , W(s') ] = 0$.
          Note that $\mA$ is local both to $\mC_\bsh^\beta$ and $\mW$.
 \end{itemize}

\section{Conclusions}
\label{sec.C}

In the present paper we presented a construction based on Hilbert bimodules,
in which the spatial tensor product of a CAR algebra by a C*-algebra 
is replaced by a twisted product.
This allows to construct field systems with non-trivial commutation relations, as in 
(\ref{eq.FF02.sd.ex}) and (\ref{eq.mod01'}).
The technical obstacles concerning tensor products and (the absence of) 
permutation symmetry in Hilbert bimodules have been overcome by introducing 
the notion of twist, which yields a class of left actions for which
these drawbacks are under control
{\footnote{
We remark that the same technique may be used
to construct bosonic Fock bimodules and the corresponding fields: 
in such a scenario, the construction in \cite{Ske98} would be analogous to a 
bosonic Fock $\mA$-bimodule, with $\mA$ the finite $d$-dimensional Weyl algebra, 
$\bsh = L^2(\bR^d)$ and $\efh = \bsh\mA$ endowed with the trivial left 
action in the sense of the present paper.}}.


\medskip 

The models presented in the previous section are elementary,
yet they pose questions that in our opinion deserve to be discussed.
For example, the model 1 exhibits Weyl unitaries that induce local gauge transformations,
thus in regular representations of $\mW$ we expect to find bosonic fields 
assuming the role usually played by the zero components of the Dirac current 
\cite[\S 4.6.1]{Strocchi} or the "longitudinal photon field" in Gupta-Bleuer gauge 
\cite[\S 7.3.2]{Strocchi}.
In the model 2.1, $\sigma$ is related to electrostatic potentials
and not surprisingly it poses the problem of extracting a \emph{local} observable subnet
from which the initial (non-local) field net should be reconstructed.
%
%
In this regard, the model 2.2 provides a simple illustration of the fact that this 
problem can be successfully solved in specific situations.
%

\medskip 

As a final remark,
we point out that the physical understanding of the notion of twist
is a topic that has not been discussed in the present paper,
in which we used this object as a mathematical input.
The correct interpretation should be obtained by a deeper discussion of our models,
especially in regular representations of $\mW$ \cite{Vas}.
In this regard, working in a fixed-time \emph{r\'egime} allows
to avoid complications and easily produce examples, 
yet our aim is to construct and discuss models in Minkowski space, 
entering in this way in an explicitly relativistic scenario \cite{Vas2}.


{\small

}

\end{document}